\newtheorem{theorem}{Theorem}[section]
\newtheorem{lemma}{Lemma}[section]
\newtheorem{proposition}{Proposition}[section]
\theoremstyle{definition}
\newtheorem{definition}{Definition}[section]
\newtheorem{remark}{Remark}[section]
\newcounter{assumption}
\renewcommand{\theassumption}{\Alph{assumption}}
\newenvironment{assumption}[1][]{%
  \refstepcounter{assumption}% increment counter and make hyperlink work
  \par\medskip\noindent
  \textbf{Assumption \theassumption.}%
  \@ifnotempty{#1}{ (#1)}%
  \quad
}{\medskip}
\definecolor{dkcolor}{rgb}{0.8,0.0,0.0} % dark red
\numberwithin{equation}{section}
\begin{document}
\setcounter{page}{1}

\vspace*{1.0cm}
\title[Inexact Zeroth-Order Nonsmooth and Nonconvex Stochastic Composite Optimization]
{Inexact Zeroth-Order Nonsmooth and Nonconvex Stochastic Composite Optimization and Applications}
\author[S. Pougkakiotis and D. Kalogerias]{Spyridon Pougkakiotis$^{1,*}$, Dionysis Kalogerias$^2$}
\maketitle
\vspace*{-0.6cm}

\begin{center}
{\footnotesize {\it

$^1$Department of Mathematics, King's College London, London, England, UK\\
$^2$Department of Electrical and Computer Engineering, Yale University, New Haven, CT, USA

}}\end{center}

\vskip 4mm {\small\noindent {\bf Abstract.}
In this paper we present an inexact zeroth-order method suitable for the solution of nonsmooth and nonconvex stochastic composite optimization problems, in which the objective is split into a real-valued Lipschitz continuous stochastic function and an extended-valued (deterministic) proper, closed, and convex one. The algorithm operates under inexact oracles providing noisy (and biased) stochastic evaluations of the underlying finite-valued part of the objective function. We show that the proposed method converges (non-asymptotically), under very mild assumptions, close to a stationary point of an appropriate surrogate problem which is related (in a precise mathematical sense) to the original one. This, in turn, provides a new notion of approximate stationarity suitable nonsmooth and nonconvex stochastic composite optimization, generalizing conditions used in the available literature.
\par In light of the generic oracle properties under which the algorithm operates, we showcase the applicability of the approach in a wide range of problems including large classes of two-stage nonconvex stochastic optimization and nonconvex-nonconcave minimax stochastic optimization instances, without requiring convexity of the lower level problems, or even uniqueness of the associated lower level solution maps. We showcase how the developed theory can be applied in each of these cases under general assumptions, providing algorithmic methodologies that go beyond the current state-of-the-art appearing in each respective literature, enabling the solution of problems that are out of reach for currently available methodologies.\\

\noindent {\bf Keywords.}
Zeroth-order optimization; Nonsmooth and nonconvex optimization; Nonconvex stochastic composite optimization; Two-stage stochastic programming; Nonconvex-nonconcave minimax optimization. }

\renewcommand{\thefootnote}{}
\footnotetext{ $^*$Corresponding author.
\par
E-mail addresses: spyridon.pougkakiotis@kcl.ac.uk (S. Pougkakiotis), dionysis.kalogerias@yale.edu (D. Kalogerias).
\par
Received August --, 2025; Accepted --. }

\section{Introduction}

\par  Let $(\Omega,\mathscr{F},\mu)$ be a complete base probability space, and consider a random vector $\xi \colon \Omega \rightarrow \Xi \subset \mathbb{R}^d$, and its induced Borel space $(\Xi,\mathscr{B}(\Xi),P)$, where $P \colon \mathscr{B}(\Xi) \rightarrow [0,1]$ is the induced Borel measure. In this paper, we consider the following \emph{nonsmooth, nonconvex and stochastic composite optimization} problem:
\begin{equation} \label{eqn: main problem} \tag{P}
\min_{x \in \mathbb{R}^n} \phi(x) \triangleq \underbrace{\mathbb{E}\{F(x,\xi)\}}_{ \triangleq f(x)} + r(x),
\end{equation}
\noindent where $r \colon \mathbb{R}^n \rightarrow \overline{\mathbb{R}}$ is a closed, proper, convex and \emph{proximable} function (i.e., a function the proximity operator of which can be computed expeditiously). Throughout this work, we will make use of the following blanket assumption on \eqref{eqn: main problem}.
\begin{assumption}\label{assum: basic assumption} 
\noindent The following conditions are in effect for \textnormal{\eqref{eqn: main problem}}:
\begin{itemize}
\item[\textbf{(A1)}] The function $F(x,\cdot)$ is Borel measurable for all $x \in \mathbb{R}^n$. For a.e. $\xi \in \Xi$, the function $F(\cdot,\xi) \colon \mathbb{R}^n \rightarrow \mathbb{R}$ is $L(\xi)-$Lipschitz continuous with $\mathbb{E}\{L^2(\xi)\} \leq G^2$, for some $G > 0$. Moreover, we have that $f(x) = \mathbb{E}\{F(x,\xi)\}$, for all $x \in \mathbb{R}^n$;
\item[\textbf{(A2)}] We can draw i.i.d. samples from the law of $\xi$;
\item[\textbf{(A3)}] We have that $r$ is closed, proper, convex, and proximable (as already mentioned).
\end{itemize}
\end{assumption}
\subsection{Prior work and core contributions}
\par Nonsmooth and nonconvex stochastic optimization has received a lot of attention in recent years, due to its ubiquitous presence in machine learning and artificial intelligence applications. A seminal work on tackling such problems was originally proposed in \cite{pmlr:Zhang_etal}, in which the authors provided an interpolated normalized gradient method for the solution of \eqref{eqn: main problem} (assuming that $r = 0$ and that $F$ belongs to an appropriate sub-class of Lipschitz functions), and showed that it converges non-asymptotically towards a $(\delta,\epsilon)-$Goldestein stationary point (for additional details on this mode of convergence, see Definition \ref{def: Goldstein subdifferential} and Section \ref{sec: termination criteria}). This work built upon earlier developments due to Goldstein (see \cite{Goldstein}) and led to a series of works extending these results. Indeed, an improved interpolated normalized gradient variant was later proposed in \cite{NEURIPS2022_2c8d9636}, showing that its non-asymptotic convergence towards a $(\delta,\epsilon)-$Goldstein stationary point holds for any Lipschitz continuous function $F$. 
\par An alternative line of work, highly related to this paper, deviated from interpolated normalized gradient schemes, considering instead (randomized) zeroth-order stochastic optimization methods (see \cite{nesterov2017random,duchi2012randomized} and the references therein for an overview on randomized zeroth-order stochastic optimization). Indeed, as was originally identified in \cite{NEURIPS2022_Linetal}, the gradients of uniform randomized smoothed surrogates associated to \eqref{eqn: main problem} (again, assuming that $r = 0$) have a close (and mathematically precise) relation to the $(\delta,\epsilon)-$Goldstein subdifferential. In turn, they were able to show that the associated zeroth-order stochastic gradient schemes arising from such smoothing strategies also converge (non-asymptotically) in the Goldstein sense, much like interpolated normalized gradient schemes, albeit with a rate that depends on the problem dimension. Improved variants (in the sense of dimension-dependence) of the algorithm presented in \cite{NEURIPS2022_Linetal} were later proposed in \cite{Chen_etal_ICML,Cutkosky_ICML} and then in \cite{JMLR:KornowskiShamir}. Considerations about the inherent need for randomized smoothing were also discussed in \cite{pmlr-v195-jordan23a}.
\par Most works on nonsmooth and nonconvex optimization currently available in the literature focus on the unconstrained (non-composite) case (i.e., in the case where $r = 0$ in \eqref{eqn: main problem}). To the best of our knowledge, the case where $r \neq 0$ (and is allowed to be extended-valued) is only considered in \cite{Liu_etal_ICML} (although structured constrained formulations of \eqref{eqn: main problem} have been considered in other studies such as in \cite{GrimmerOptLetters}), where the authors propose a zeroth-order optimization scheme in the case where $r$ is an indicator to a closed convex and compact set. The authors in \cite{Liu_etal_ICML} generalize $(\delta,\epsilon)-$Goldstein stationarity to fit the their problem (cf. \cite[Definition 4.2]{Liu_etal_ICML}) by appropriately extending the well-known \emph{gradient mapping} (see \cite[Section 2.2.4]{nesterov2018lectures} for a definition of this mapping) using the Goldstein subdifferential; their proposed stationarity condition is different from the generalization proposed in this work (cf. Section \ref{sec: termination criteria}), which combines surrogate stationarity based on both the Moreau envelope (akin to that considered in \cite{DavisDrus_SIOPT} for weakly convex composite optimization) and the Goldstein subdifferential (as is done in standard unconstrained nonsmooth and nonconvex optimization; e.g., see \cite{pmlr:Zhang_etal}). The proposed notion of stationarity has several benefits compared to that considered in \cite{Liu_etal_ICML}. On the one hand, it readily enables the use of a generic convex regularizer $r$. On the other hand, it provides a natural framework for analyzing the (non-asymptotic) convergence of zeroth-order optimization schemes applied to \eqref{eqn: main problem}, while maintaining crucial connections to the Goldstein (approximate) stationarity in the unconstrained case (i.e., when $r = 0$).
\par More crucially, current algorithms studied in the literature, suitable for the solution of nonsmooth and nonconvex optimization problems, operate under the assumption of the availability of \emph{exact oracles} able to evaluate the stochastic function $F(x,\xi)$, for any $x \in \mathbb{R}^n$ and a.e. $\xi \in \Xi$. As will become clear in Sections \ref{subsec: applications} and \ref{sec: applications}, enabling the presence of inexactness in the evaluations of $F(x,\xi)$ is of paramount importance for several applications of practical interest. Thus, this work aims at closing core gaps in the current literature of nonsmooth and nonconvex stochastic optimization, by providing a natural condition of stationarity suitable for the constrained (or composite) case, while at the same time allowing for errors in the underlying stochastic function evaluations. Furthermore, by specializing the notion of an inexact oracle in the context of zeroth-order stochastic optimization, we provide new and general conditions on the associated oracle errors. In turn, this enables us to derive improved (non-asymptotic) convergence rate bounds under very reasonable oracle error conditions, by simply utilizing the properties of zeroth-order  optimization schemes.
\subsection{Related applications} \label{subsec: applications}
\par Problem \eqref{eqn: main problem} is prominent in a plethora of applications of great interest, stemming from machine learning to operational research and signal processing. Specifically, nonsmooth and nonconvex optimization involving Lipschitz continuous functions has received a lot of attention in the recent literature (e.g., see \cite{pmlr:Zhang_etal,NEURIPS2022_2c8d9636,NEURIPS2022_Linetal} and the references therein) due to its direct application on the training of neural networks which, when seen as compositional functions, often fail to satisfy standard assumptions like weak convexity, Lipschitz smoothness or even subdifferential regularity. Indeed, as is already mentioned in \cite{NEURIPS2022_2c8d9636}, most (sub)gradient-based methods rely on some form of subdifferential regularity, which fails when the function that is being optimized exhibits some ``downward cusps" (e.g., see the example $f(x) = (1-\max\{x,0\})^2$, given in \cite{NEURIPS2022_2c8d9636}).
\par As we have already hinted earlier, one major gap in the current literature of nonsmooth and nonconvex optimization is the derivation of algorithms that are able to operate under noisy and inexact function evaluations. This is especially important in cases where the function $F(\cdot,\xi)$ appearing in \eqref{eqn: main problem} is itself a (possibly nonconvex) optimization problem. In this case, under fairly general conditions (e.g., see the discussion in Section \ref{sec: applications} as well as the comprehensive exposition given in \cite{ShapiroPerturbationAnalysis}), one may be able to show that $F(\cdot,\xi)$ is (possibly Lipschitz) continuous, but not necessarily differentiable or even subdifferentially regular. In this regime, the assumption that $F(\cdot,\xi)$ can be evaluated exactly, for a.e. $\xi \in \Xi$, is quite strong (since its evaluation typically occurs via the utilization of an ``inner-layer" numerical optimization scheme). Two very important classes of problems that exhibit this behavior are (possibly nonconvex) two-stage stochastic programs and stochastic minimax optimization instances. Additionally, the same considerations apply in the context of hyperparameter tuning of black-box systems, the evaluation of which might be noisy and inexact (e.g., in case the objective function is evaluated via the utilization of a simulation process; the reader is referred to \cite[Section 4.2]{Pougk_SISC} for an example of hyperparameter tuning in this context).
\par More concretely, in the case of two-stage stochastic programming, the function $F(x,\xi)$ is defined as $F(x,\xi) = \min_{y \in \mathcal{Y}(x,\xi)} \hat{F}(x,y,\xi)$, where $\mathcal{Y}(x,\xi) \subset \mathbb{R}^m$ is the feasible set of the second-stage variable $y$. Two-stage stochastic programming problems appear in a plethora of applications in operational research and engineering. While many such instances are posed in the context of convex stochastic optimization (e.g., see the detailed exposition in \cite[Chapter 2]{ShapiroLecturesStochProg3rd}), nonconvex formulations are also highly relevant. A typical example arises in the context of beamforming optimization for wireless communication systems, in cases where the performance of the underlying network can be improved by tuning an appropriate set of parameters in a long timescale, jointly with optimizing short-time scale (i.e., recourse) variables \cite{2S_Liu2021,2S_Zhai2022, 2S_Zhao2022, 2S_Zhao2024}; see also the recent line of work \cite{Hashmi_etal_ICASSP,Hashmi_etal_IEEETran,Pougk_etal_IRSinexact} within the more specialized but challenging context of intelligent reflecting surface-assisted beamforming. Another separate example of an application of two-stage stochastic optimization on certain meta-learning problems arising in the area of computer vision may be found in \cite{CV_10203333}.
\par In the case of minimax stochastic optimization, we may separate two distinct cases. The first class of instances arises by letting $F(x,\xi) = \max_{y \in \mathcal{Y}(x,\xi)} \hat{F}(x,y,\xi)$, where $\mathcal{Y}(x,\xi) \subset \mathbb{R}^m$ is the feasible set of the \textit{adversarial variable} $y$. In essence, in this formulation, the adversary is given access to instantaneous information and thus \textit{from this point of view} the underlying stochastic minimax optimization problem is fairly similar to two-stage stochastic programming models, although structurally different. One of the most important applications of this problem formulation arises in the context of building neural networks robust to adversarial attacks (e.g., see the seminal paper \cite{madry2018towards} and numerous follow-up works). Problems of this form are practically solved via approximate stochastic hypergradient descent-type schemes (again, see \cite{madry2018towards}), although without any theoretical guarantees, despite the inherent assumption that the feasible set $\mathcal{Y}$ of the adversarial variable is independent of both $x$ and $\xi$. Nonetheless, we conjecture that the stochastic hypergradient descent approach proposed in \cite{Pougk_etal_IRSinexact} in the context of nonconvex two-stage stochastic programming can possibly be adapted in this case and be shown to be non-asymptotically convergent under certain regularity and structural assumptions.
\par The second class of stochastic minimax optimization instances, which is very well-studied in the literature, arises by assuming that the adversary only has access to ergodic information, in which case the objective function of \eqref{eqn: main problem} reads $F(x,\xi) = \min_{y \in \mathcal{Y}(x)} \mathbb{E}\{\hat{F}(x,y,\xi)\}$, where once again $\mathcal{Y}(x) \subset \mathbb{R}^m$ is the feasible set of the adversarial variable $y$. Such problems have multiple applications, especially in the context of machine learning, including generative adversarial networks (e.g., \cite{goodfellow2014generative}), online adversarial learning (e.g., \cite{Cesa-Bianchi_Lugosi_2006}), robust training of neural networks (e.g., \cite{awasthi2021certifying}), nested optimization in reinforcement learning (e.g., see \cite{10.5555/2888116.2888133}), and distributionally robust optimization (e.g., see \cite{Rahimian2019DistributionallyRO}), to name a few. In most of these applications, it is assumed that $\mathcal{Y}$ does not depend on $x$, and under assumptions like Lipschitz smoothness on $\hat{F}$ and lower level concavity, typical solution methods rely either on stochastic gradient descent-ascent variants (e.g., \cite{JMLR:v26:22-0863}) or the extragradient method under additional conditions (e.g., \cite{pmlr-v238-emmanouilidis24a}), although recent works have also investigated the fully nonconvex-nonconcave setting under alternative (and even stronger) structural assumptions (e.g., see \cite{pmlr-v195-daskalakis23b,diakonikolas2021efficient,grimmer2022landscape}).
\par In this work, we showcase that a single algorithmic strategy, as proposed in this work, can be readily adapted and applied to each of these problems classes, resulting in solution methods that operate under very general assumptions, going beyond the current state-of-the-art in each respective literature, albeit at the cost of two function evaluations at each iteration (i.e., two inexact inner-problem solutions at adjacent outer-problem points). Nonetheless, we showcase that despite the added computational overhead, the proposed methodology is otherwise very efficient and can operate in regimes that are inaccessible to alternative approaches, offering strong modeling capabilities and robustness.
\par We now provide an overview of this paper. Specifically, in Section \ref{subsec: notation} we summarize the notation used throughout this work. Then, in Section \ref{sec: preliminaries} we provide necessary background material on nonsmooth optimization, variational analysis, randomized smoothing and evaluation oracles. Subsequently, in Section \ref{sec: termination criteria}, we propose a new notion of approximation stationarity that is suitable for nonsmooth and nonconvex stochastic composite optimization. This is then used in Section \ref{sec: inexact zeroth-order method}, where we derive the proposed algorithm and show its non-asymptotic convergence under minimal assumptions. The results of Section \ref{sec: inexact zeroth-order method} are then specialized to fit different applications in Section \ref{sec: applications} to showcase the power and generality of the proposed methodological framework. Finally, we close this paper by collecting some conclusions in Section \ref{sec: conclusions}.
\subsection{Notation} \label{subsec: notation}  Throughout this work we write $\|\cdot\|$ to denote the standard Euclidean norm. Given some positive constant $\epsilon > 0$ and some $x \in \mathbb{R}^n$, we let $\mathbb{B}_{\epsilon}(x)$ denote the open $\epsilon-$ball around $x$ on $\mathbb{R}^n$, i.e., $\mathbb{B}_{\epsilon}(x) \triangleq \{y \in \mathbb{R}^n\ \vert\ \|y-x\| < \epsilon\}$. Similarly, the closed $\epsilon-$ball around $x$ on $\mathbb{R}^n$ is denoted as $\overline{\mathbb{B}}_{\epsilon}(x)$. The unit sphere on $\mathbb{R}^n$ is denoted as $\mathbb{S}^{n-1} \triangleq \{x \in \mathbb{R}^n \ \vert\ \|x\| = 1\}$. We let $\overline{\mathbb{R}} \triangleq \mathbb{R} \cup \{\pm \infty\}$. A function $f \colon \mathbb{R}^n \rightarrow \mathbb{R}$, is said to be $L-$Lipschitz if for every $x, x' \in \mathbb{R}^n$ we have $|f(x) - f(x')| \leq L \|x-x'\|$. Given two real-valued functions $f,\ g$ on $\mathbb{R}^n$, we denote their \emph{integral convolution} as $\left(f * g\right)(x) \triangleq \int_{\mathbb{R}^n}f(\tau)g(x-\tau)d\tau \equiv \int_{\mathbb{R}^n} f(x-\tau)g(\tau)d\tau$ (assuming it is well-defined). Associated with integral convolution, and given a function $f \colon \mathbb{R}^n \rightarrow \mathbb{R}$, we define the \emph{dilation operation} as $(\lambda \bullet f)(x) = \lambda^{n} f(\lambda x),$ for any $\lambda > 0$, noting that this operation dilates $f$, compressing it towards the origin without altering its integral over $\mathbb{R}^n$.
\par Given a closed and proper function $f \colon \mathbb{R}^n \rightarrow \overline{\mathbb{R}}$, we define its proximity operator as $\textbf{prox}_{\lambda f}(x) \triangleq \arg\min_{w \in \mathbb{R}^n} \{f(w) + 1/(2\lambda)\|w-x\|^2 \},$ where $\lambda > 0$. If $\textbf{prox}_{\lambda f}(x)$ can be computed expeditiously (e.g., in closed-form), we say that $f$ is \emph{proximable}. Similarly, we define the \emph{Moreau envelope} of $f$ as $e_{\lambda}f(x) \triangleq \inf_{w \in \mathbb{R}^n} \{f(w) + 1/(2\lambda)\|w-x\|^2\}$. For some $\rho \geq 0$, we define the space of $\rho-$weakly convex functions as
\[ \Gamma_{\rho}(\mathbb{R}^n) \triangleq \left\{f \colon \mathbb{R}^n \rightarrow \overline{\mathbb{R}}\ \big\vert\ f\text{ is proper, closed, and }f+\frac{\rho}{2}\|\cdot\|^2\text{ is convex}\right\},\]
\noindent noting that $\Gamma_0(\mathbb{R}^n)$ denotes the set of closed, proper, and convex functions.
\par Given a proper and closed function $f \colon \mathbb{R}^n \rightarrow \overline{\mathbb{R}}$, we define the \emph{regular subdifferential} of $f$ at $\bar{x} \in \mathbb{R}^n$, denoted as $\hat{\partial}f(\bar{x})$, as the set of all vectors $v \in \mathbb{R}^n$ that satisfy
\[f(x) \geq f(\bar{x}) + v^\top (x-\bar{x}) + o\left( \|x-\bar{x}\|\right).\]
\noindent The \emph{limiting subdifferential} of $f$ at $\bar{x} \in \mathbb{R}^n$, denoted as $\partial f(\bar{x})$, is defined as the set of vectors $v \in \mathbb{R}^n$ for which there exist sequences $x_k \rightarrow_f \bar{x}$ and $v_k \in \hat{\partial}f(x_k)$, with $v_k \rightarrow v$, where $x \rightarrow_f \bar{x}$ denotes $f-$attentive convergence. Finally, we denote the Clarke subdifferential of $f$ at $\bar{x}$ as $\bar{\partial}f(\bar{x})$. If $f$ is subdifferentially regular at $\bar{x}$, we have that $\partial f(\bar{x}) = \hat{\partial} f(\bar{x}) = \bar{\partial}f(\bar{x})$ (e.g., this holds for any $f \in \Gamma_{\rho}(\mathbb{R}^n)$).

\section{Preliminaries} \label{sec: preliminaries}
\subsection{Clarke and Goldestein subdifferentials}

\par We begin our discussion by characterizing the Clarke subdifferential for Lipschitz functions. Its construction relies on the fact that, due to Rademacher's theorem, any Lipschitz function is almost everywhere differentiable (i.e., the subset of $\mathbb{R}^n$ in which $f$ is non-differentiable has Lebesgue measure zero). This is done in the following proposition, which is due to Clarke \cite{Clarke}.
\begin{proposition}[Clarke subdifferential characterization (Lipschitz functions)  \cite{Clarke}]
Let $f \colon \mathbb{R}^n \rightarrow \mathbb{R}$ be an $L-$Lipschitz function, for some $L > 0$. Then, for any $x \in \mathbb{R}^n$ and any $g \in \bar{\partial} f(x)$, we have that $\|g\| \leq L$, and the set-valued mapping $\bar{\partial}f(\cdot)$ is upper semicontinuous. Morover, for any $x, x' \in \mathbb{R}^n$, there exists $\lambda \in (0,1)$ and $g \in \bar{\partial}f(\lambda x+ (1-\lambda)x')$, such that $f(x) - f(x') = g^\top (x-x')$. Finally,
\[ \bar{\partial} f(x) \equiv \text{conv}\left(\left\{g \in \mathbb{R}^n\ \vert\ g = \lim_{x_k \rightarrow x} \nabla f(x_k) \right\}\right),\]
\noindent i.e., the Clarke subdifferential is the convex hull of all limit points of $\nabla f(x_k)$ over all sequences $\{x_k\}_{k=0}^{\infty}$ of differentiable points of $f(\cdot)$ which converge to $x$.
\end{proposition}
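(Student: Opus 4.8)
The plan is to work throughout with the Clarke directional derivative
\[ f^{\circ}(x;v) \triangleq \limsup_{y \to x,\ t \downarrow 0} \frac{f(y+tv) - f(y)}{t}, \]
and to recall that the Clarke subdifferential admits the dual description $\bar{\partial}f(x) = \{g \in \mathbb{R}^n : g^\top v \leq f^{\circ}(x;v)\ \text{for all}\ v \in \mathbb{R}^n\}$; equivalently, $f^{\circ}(x;\cdot)$ is the support function of the closed convex set $\bar{\partial}f(x)$. The norm bound is then immediate: $L$-Lipschitzness of $f$ gives $|f(y+tv)-f(y)| \leq L t \|v\|$, hence $f^{\circ}(x;v) \leq L\|v\|$ for every $v$; choosing $v = g$ for $g \in \bar{\partial}f(x)$ yields $\|g\|^2 \leq L\|g\|$, i.e.\ $\|g\| \leq L$. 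This simultaneously shows that $\bar{\partial}f$ is uniformly bounded, a fact I would reuse for both the upper semicontinuity and the gradient formula.

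For upper semicontinuity I would first observe that, for each fixed $v$, the map $x \mapsto f^{\circ}(x;v)$ is upper semicontinuous---this is essentially automatic because $f^{\circ}(x;v)$ is itself a $\limsup$ over base points near $x$. Then, given $x_k \to x$, $g_k \in \bar{\partial}f(x_k)$ with $g_k \to g$, I would pass to the limit in the defining inequalities $g_k^\top v \leq f^{\circ}(x_k;v)$ to obtain $g^\top v \leq f^{\circ}(x;v)$ for all $v$, so $g \in \bar{\partial}f(x)$; combined with the uniform bound $\|g_k\|\le L$ this is precisely upper semicontinuity of the compact-convex-valued map $\bar{\partial}f$. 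The mean value property is Lebourg's theorem: setting $\theta(t) \triangleq f\big((1-t)x' + t x\big) + t\big(f(x') - f(x)\big)$ on $[0,1]$, one has $\theta(0) = \theta(1) = f(x')$, so $\theta$ attains an interior extremum at some $\lambda \in (0,1)$, whence $0 \in \bar{\partial}\theta(\lambda)$; applying the Clarke chain rule to the composition of $f$ with the affine map $t \mapsto (1-t)x' + tx$ then produces $g \in \bar{\partial}f\big((1-\lambda)x' + \lambda x\big)$ with $g^\top(x-x') = f(x) - f(x')$.

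The substantive part is the gradient formula. Write $D_f$ for the full-measure set of differentiability points furnished by Rademacher's theorem and let $G(x)$ denote the convex hull on the right-hand side. The inclusion $G(x) \subseteq \bar{\partial}f(x)$ is the easy one: at any $x_k \in D_f$ one has $\nabla f(x_k) \in \bar{\partial}f(x_k)$, so every limit $\lim_k \nabla f(x_k)$ with $x_k \to x$ lies in $\bar{\partial}f(x)$ by the upper semicontinuity just established, and passing to the convex hull is harmless since $\bar{\partial}f(x)$ is convex. For the reverse inclusion I would argue at the level of support functions: since both $G(x)$ and $\bar{\partial}f(x)$ are nonempty compact convex sets, it suffices to prove $\sigma_{G(x)}(v) = f^{\circ}(x;v)$ for every direction $v$. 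The bound $\sigma_{G(x)}(v) \leq f^{\circ}(x;v)$ follows from $G(x) \subseteq \bar{\partial}f(x)$. The reverse, $f^{\circ}(x;v) \leq \sigma_{G(x)}(v)$, is where Rademacher's theorem is used essentially: taking sequences $y_j \to x$, $t_j \downarrow 0$ realizing $f^{\circ}(x;v)$, a Fubini argument shows that after an arbitrarily small perturbation of $y_j$ the segment $[y_j, y_j + t_j v]$ meets $D_f$ in full one-dimensional measure, so the difference quotient may be written as $\frac{1}{t_j}\int_0^{t_j} \nabla f(y_j + s v)^\top v\, ds$; bounding the integrand by $\sup\{g^\top v : g \in \bar{\partial}f(z),\ z \in \overline{\mathbb{B}}_{\delta}(x)\}$ and letting $j \to \infty$ and $\delta \downarrow 0$ (using upper semicontinuity to collapse this local supremum onto $\sigma_{G(x)}(v)$) yields the claim.

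I expect this last step---controlling the segmental integrals via a Fubini/measure-zero argument and then passing to the limit so that the local supremum over $\bar{\partial}f$ condenses to $\sigma_{G(x)}(v)$---to be the main obstacle; everything else is either a direct consequence of Lipschitz continuity or a formal manipulation of support functions and the nonsmooth chain rule.
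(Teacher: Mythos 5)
The paper itself offers no proof of this proposition---it is quoted as a known result and attributed to Clarke's monograph---so your proposal must be judged against the classical arguments, which it follows in outline: the support-function (dual) description of $\bar{\partial}f$, Lebourg's mean value theorem via a Rolle-type trick and the chain rule for affine compositions, and a Rademacher--Fubini argument for the gradient formula. The norm bound, the upper semicontinuity argument (graph closedness from passing to the limit in $g_k^\top v \leq f^{\circ}(x_k;v)$, combined with the uniform bound $\|g_k\| \leq L$), the easy inclusion $G(x) \subseteq \bar{\partial}f(x)$, and the mean value step are all correct as sketched.

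However, the final---and, as you yourself note, substantive---step contains a genuine circularity. You bound the integrand $\nabla f(y_j+sv)^\top v$ by $\sup\{g^\top v : g \in \bar{\partial}f(z),\ z \in \overline{\mathbb{B}}_{\delta}(x)\}$ and then claim that upper semicontinuity of $\bar{\partial}f$ collapses this supremum onto $\sigma_{G(x)}(v)$ as $\delta \downarrow 0$. It does not: upper semicontinuity collapses it onto $\sigma_{\bar{\partial}f(x)}(v) = f^{\circ}(x;v)$, and identifying $\sigma_{\bar{\partial}f(x)}$ with $\sigma_{G(x)}$ is precisely the theorem being proved. As written, your chain of inequalities only yields $f^{\circ}(x;v) \leq f^{\circ}(x;v)$. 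Note also that restricting $z$ to differentiability points does not rescue this version of the bound: differentiability of $f$ at $z$ does not force $\bar{\partial}f(z) = \{\nabla f(z)\}$ (that requires strict differentiability), so the supremum over Clarke subdifferentials can strictly exceed the supremum over gradients. The fix is to keep the bound at the level of actual gradients, which is what the integrand consists of: bound $\nabla f(y_j+sv)^\top v$ by $m_{\delta}(v) \triangleq \sup\{\nabla f(z)^\top v : z \in D_f \cap \overline{\mathbb{B}}_{\delta}(x)\}$. Then $\lim_{\delta \downarrow 0} m_{\delta}(v) = \limsup_{z \rightarrow x,\, z \in D_f} \nabla f(z)^\top v$, and since all gradients near $x$ are bounded in norm by $L$, any sequence $z_k \rightarrow x$ in $D_f$ realizing this limit superior admits a subsequence with $\nabla f(z_k)$ converging to some limit point $g \in G(x)$; hence the limit superior equals $g^\top v \leq \sigma_{G(x)}(v)$, with no appeal to upper semicontinuity of $\bar{\partial}f$ needed in this step at all. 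With that single replacement, your argument closes correctly and coincides with the standard proof.
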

\par Consider the minimization of a general Lipschitz continuous function $f$. It is known that finding an $\epsilon-$Clarke stationary point, in the sense that we have found an $x$ such that $\min\{\|g\|\  \vert\ g \in \partial f(x)\} \leq \epsilon$, is intractable (see \cite{pmlr:Zhang_etal}). Instead, it has been observed that a relaxation of $\epsilon-$Clarke stationarity, known as the $(\mu,\epsilon)-$Goldstein stationarity (see Section \ref{sec: termination criteria} for a definition), is computationally tractable. This relies on the so-called $\mu-$Goldstein subdifferential, which we define next.
\begin{definition}[$\mu-$Goldstein subdifferential \cite{Goldstein}] \label{def: Goldstein subdifferential}
Let $f \colon \mathbb{R}^n \rightarrow \mathbb{R}$ be an $L-$Lipschitz function. Given any $x \in \mathbb{R}^n$, the $\mu-$Goldstein subdifferential of $f$ at $x$ is defined by $\bar{\partial}_{\mu} f(x) \triangleq \text{conv}\left(\cup_{y \in \overline{\mathbb{B}}_{\mu}(x)} \bar{\partial} f(y)\right)$, where $\mu > 0$ is a positive constant. 
\end{definition}
\noindent We revisit the notion of generalized approximate stationarity in the context of nonsmooth and nonconvex optimization in Section \ref{sec: termination criteria}.

\subsection{Uniform randomized smoothing}
\par We next introduce the notion of uniform randomized smoothing, which is obtained by the operation of integral convolution and dilation. To that end, we let $g:\mathbb{R}^n \rightarrow \mathbb{R}$ be an \emph{integral smoothing kernel}, i.e., a bounded piecewise continuous density function (i.e., $\int_{\mathbb{R}^n} g(x)dx = 1$) that is even (i.e., $g(-x) = g(x)$), and satisfies
\[ \int_{\mathbb{R}^n} \|x\| g(x) dx < +\infty.\] 
\noindent Then, given some $L-$Lipschitz function $f \colon \mathbb{R}^n \rightarrow \mathbb{R}$, we define the surrogate function $f_{\mu} \colon \mathbb{R}^n \rightarrow \mathbb{R}$ as
\begin{equation*}
\begin{split}
f_{\mu}(x) & = \left(f * \left(\mu^{-1}\bullet g\right)\right)(x) \equiv \int_{\mathbb{R}^n} f(x-t) \left(\mu^{-1}\bullet g\right)(t)dt \\
& = \int_{\mathbb{R}^n} f(x-\mu t) g(t)dt \equiv \mathbb{E}_g\left\{ f(x+\mu t)\right\},
\end{split}
\end{equation*}
\noindent where we used a change of variables and the last equivalence follows from the symmetry of $g$. In this paper, we will focus on a particular mollifier function $g$, namely, the probability density function (p.d.f.) of a uniform random vector $U$ over the closed unit ball on $\mathbb{R}^n$ (i.e., over $\overline{\mathbb{B}}_1(0_n)$). Specifically, let $U \sim \mathrm{U}\left(\overline{\mathbb{B}}_1(0_n) \right)$. Then, the p.d.f., say $g$, of $U$ reads:
\[ g_n(u) = \begin{cases}
\frac{1}{c_n}, & \text{if }\|u\| \leq 1,\\
0, &\text{otherwise}
\end{cases},\ \text{with } c_n \triangleq \frac{\pi^{n/2}}{\Gamma(n/2+1)},\ \Gamma(n/2+1) \triangleq \begin{cases} (n/2)!,& \text{if }n\text{ is even}\\
\sqrt{\pi}\frac{n!!}{2^{(n+1)/2}},&\text{if }n\text{ is odd}\end{cases},\]
\noindent where $n!! = n(n-2)\cdots 2$ if $n$ is even and $n!! = n (n-2)\cdots 1$, if $n$ is odd. Applying the dilation operation on $g$ with a constant $\mu^{-1} > 0$ yields the p.d.f. of a uniform random variable over the $\mu-$closed ball, i.e., 
\[(\mu^{-1} \bullet g)(u) =  \begin{cases}
\frac{1}{c_n \mu^n}, & \text{if }\|u\| \leq \mu,\\
0, &\text{otherwise}
\end{cases}.\]
Next, we provide a well-known key result which showcases the smoothing effect of integral convolution, under the assumption of Lipschitz continuity of $f$.

\begin{lemma}[Uniform randomized smoothing of Lipschitz functions] \label{lemma: integral smoothing via mollifiers}
Let $f \colon \mathbb{R}^n \rightarrow \mathbb{R}$ be an  $L-$Lipschitz continuous function, and let $g \colon \mathbb{R}^n \rightarrow \mathbb{R}$ be the p.d.f. of a uniform random variable, say $U \colon \Omega \rightarrow \mathbb{R}^n$, over the $n-$dimensional unit ball $\overline{\mathbb{B}}_1(0_n)$, i.e., $U \sim \mathrm{U}\left(\overline{\mathbb{B}}_1(0_n) \right)$. Then, the surrogate function defined as
\[ f_{\mu}(x) = \left(f * \left(\mu^{-1}\bullet g\right)\right)(x),\qquad \text{for all }x \in \mathbb{R}^n,\]
\noindent satisfies the following:
\begin{itemize}
\item $f_{\mu}$ is $L-$Lipschitz continuous and $|f_{\mu}(x) - f(x)| \leq \mu L$, for all $x \in \mathbb{R}^n$;
\item $f_{\mu}$ is $\frac{cL \sqrt{n}}{\mu}-$Lipschitz smooth, where $c > 0$ is a bounded constant independent of $n$. Moreover, we have that
\begin{equation*}
\begin{split}
\nabla f_{\mu}(x) & = \frac{n}{\mu}\mathbb{E}_{W \sim \mathrm{U}\left(\mathbb{S}^{n-1}\right)}\left\{f(x+\mu W)W\right\} \\
 & \equiv \frac{n}{2\mu}\mathbb{E}_{W \sim \mathrm{U}\left(\mathbb{S}^{n-1}\right)}\left\{\left(f(x+\mu W) - f(x-\mu W)\right)W\right\},
 \end{split}
 \end{equation*}
 \noindent where, as indicated above, $W$ is a uniform random variable over the $n-$dimensional unit sphere $\mathbb{S}^{n-1}$;
\item For all $x \in \mathbb{R}^n$, we have that $\nabla f_{\mu}(x) \in \bar{\partial}_{\mu} f(x)$, where $\bar{\partial}_{\mu} f(x)$ is the $\mu-$Goldestein subdifferential of $f$ at $x$.
\end{itemize}

 Moreover, at every $\bar{x} \in \mathbb{R}^n$, we have
\[ \bar{\partial} f(\bar{x}) = \text{conv}\left(\limsup_{x \rightarrow \bar{x}, \mu \searrow 0} \nabla f_{\mu}(x)\right),\]
\noindent i.e., gradient consistency holds, noting that the outer limit is defined as
\[\limsup_{x \rightarrow \bar{x}, \mu \searrow 0} \nabla f_{\mu}(x) \triangleq \left\{v\ \vert\ \exists\ \{(x_k,\mu_k)\}_{k \in \mathbb{N}} \rightarrow (\bar{x},0),\ \text{such that } \nabla f_{\mu_k}(x_k) \rightarrow v\right\}.\]
\end{lemma}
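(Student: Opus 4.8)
The plan is to verify the three bulleted properties in turn and then establish gradient consistency, working throughout from two equivalent representations of $f_{\mu}$. First I would record that, after the change of variables $u = \mu t$ followed by $v = x+u$, one has $f_{\mu}(x) = \frac{1}{c_n \mu^n}\int_{\overline{\mathbb{B}}_{\mu}(x)} f(v)\,dv$, i.e. $f_{\mu}(x) = \mathbb{E}_U\{f(x+U)\}$ with $U \sim \mathrm{U}(\overline{\mathbb{B}}_{\mu}(0_n))$. The first bullet is then immediate: Lipschitz continuity follows by pulling the difference inside the expectation and using the $L$-Lipschitz bound on $f$ together with Jensen's inequality, while $|f_{\mu}(x) - f(x)| \leq \mathbb{E}_U\{|f(x+U) - f(x)|\} \leq L\,\mathbb{E}_U\{\|U\|\} \leq L\mu$, since $\|U\| \leq \mu$ almost surely.

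Next I would establish the gradient formula via the divergence theorem: writing $f_{\mu}(x) = \frac{1}{c_n\mu^n}\int_{\overline{\mathbb{B}}_{\mu}(0_n)} f(x+u)\,du$ and differentiating this moving-domain integral converts the volume integral of $\partial_{u_i} f(x+u)$ into a surface integral of $f$ against the outward normal $u/\mu$ on $\partial \overline{\mathbb{B}}_{\mu}(0_n)$. Parametrizing that sphere by $u = \mu w$, $w \in \mathbb{S}^{n-1}$ (so $dS(u) = \mu^{n-1}\,dS(w)$) and using $S_{n-1} = n c_n$ for the surface area of the unit sphere collapses the constants to give $\nabla f_{\mu}(x) = \frac{n}{\mu}\mathbb{E}_W\{f(x+\mu W)W\}$; the symmetric-difference form follows by averaging this with its image under $W \mapsto -W$ (equal in law) and using $\mathbb{E}_W\{f(x)W\} = 0$. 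The one technical point is that the divergence theorem is being applied to a merely Lipschitz $f$, which I would handle by proving the identity for smooth $f$ and passing to the limit under mollification, the surface form also showing that $\nabla f_{\mu}$ is continuous, hence $f_{\mu} \in C^1$. For the smoothness constant, the naive bound $\|\nabla f_{\mu}(x) - \nabla f_{\mu}(x')\| \leq \frac{n}{\mu}L\|x-x'\|$ is off by a factor $\sqrt{n}$, and recovering the sharper constant is the crux. I would instead fix a unit vector $v$, write $v^\top(\nabla f_{\mu}(x) - \nabla f_{\mu}(x')) = \frac{n}{\mu}\mathbb{E}_W\{(f(x+\mu W) - f(x'+\mu W))(v^\top W)\}$, and apply Cauchy--Schwarz in $L^2(W)$: the first factor is bounded pointwise by $L\|x-x'\|$, while $\mathbb{E}_W\{(v^\top W)^2\} = v^\top \mathbb{E}_W\{WW^\top\}v = \tfrac1n$, which is exactly where the gain $n\cdot \tfrac{1}{\sqrt n} = \sqrt n$ originates. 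Taking the supremum over $\|v\| = 1$ yields the $\tfrac{L\sqrt n}{\mu}$ bound with $c = 1$.

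For the third bullet I would use the volume representation $\nabla f_{\mu}(x) = \mathbb{E}_U\{\nabla f(x+U)\}$, valid because $f$ is differentiable almost everywhere with $\|\nabla f\| \leq L$ by Rademacher's theorem and the preceding Proposition. For almost every $u$ we have $\nabla f(x+u) \in \bar{\partial}f(x+u) \subseteq \bigcup_{y \in \overline{\mathbb{B}}_{\mu}(x)} \bar{\partial}f(y) =: S$, and $S$ is compact by upper semicontinuity and compact-valuedness of $\bar{\partial}f$ over the compact ball, so $\bar{\partial}_{\mu}f(x) = \mathrm{conv}(S)$ is compact and convex. Since $\nabla f_{\mu}(x)$ is the expectation of an $S$-valued random vector, a separating-hyperplane argument ($\mathbb{E}\{v^\top \nabla f(x+U)\} \leq \sup_{s \in S} v^\top s$ for every $v$) places it in $\mathrm{conv}(S) = \bar{\partial}_{\mu}f(x)$.

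Finally, for gradient consistency, writing $G$ for the outer limit set, the inclusion $\mathrm{conv}(G) \subseteq \bar{\partial}f(\bar{x})$ is the easy direction: any $v = \lim_k \nabla f_{\mu_k}(x_k)$ with $(x_k,\mu_k) \to (\bar{x},0)$ lies, by the third bullet, in $\bar{\partial}_{\mu_k}f(x_k)$, and upper semicontinuity of $\bar{\partial}f$ forces these sets into $\bar{\partial}f(\bar{x}) + \epsilon\overline{\mathbb{B}}_1(0_n)$ as the balls $\overline{\mathbb{B}}_{\mu_k}(x_k)$ shrink to $\bar{x}$; closedness and convexity of $\bar{\partial}f(\bar{x})$ then finish it. The reverse inclusion is the main obstacle. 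Here I would invoke Clarke's characterization to write $\bar{\partial}f(\bar{x}) = \mathrm{conv}\{\lim_k \nabla f(x_k) : x_k \to \bar{x}\}$, refined so that the $x_k$ may be drawn from the full-measure set $D$ of points that are simultaneously differentiability points and Lebesgue points of $\nabla f$. For $x \in D$ the Lebesgue differentiation theorem gives $\nabla f_{\mu}(x) = \tfrac{1}{c_n\mu^n}\int_{\overline{\mathbb{B}}_{\mu}(x)} \nabla f \to \nabla f(x)$ as $\mu \searrow 0$; a diagonal choice of $\mu_k \searrow 0$ with $\|\nabla f_{\mu_k}(x_k) - \nabla f(x_k)\| < 1/k$ then realizes each generating gradient $\lim_k \nabla f(x_k)$ as an element of $G$, whence $\bar{\partial}f(\bar{x}) \subseteq \mathrm{conv}(G)$. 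The two delicate ingredients are the measure-zero refinement of Clarke's theorem and the pointwise a.e. convergence $\nabla f_{\mu} \to \nabla f$, both of which rely on the volume representation of $\nabla f_{\mu}$ rather than the surface one.
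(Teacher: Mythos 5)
Your proposal is correct, but it takes a genuinely different route from the paper: the paper does not argue the lemma at all, deferring entirely to the literature --- \cite[Proposition 2.2 and Theorem 3.1]{NEURIPS2022_Linetal} for the three bullets, \cite[Lemma 2.1]{Flaxman_etal} and \cite{JMLR:Shamir} for the two gradient representations, and a ``trivial extension'' of \cite[Theorem 9.67]{Rockafellar2009VarAn} for gradient consistency. You instead rebuild everything from first principles: the ball-average representation for the Lipschitz and approximation bounds; the divergence theorem (with mollification to cover merely Lipschitz $f$) for the sphere representation of $\nabla f_{\mu}$; a Cauchy--Schwarz argument exploiting $\mathbb{E}\{WW^\top\} = n^{-1}I$ for the smoothness constant, which has the nice side effect of showing $c = 1$ suffices; the a.e.-gradient volume representation plus a support-function argument for the Goldstein inclusion; and, for gradient consistency, a null-set refinement of Clarke's characterization combined with Lebesgue differentiation and a diagonal extraction. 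The paper's citation route buys brevity and reliance on vetted results; yours buys a self-contained argument that makes the constants and mechanisms visible (in particular, where the $\sqrt{n}$ gain over the naive $n L/\mu$ bound comes from). Two points deserve to be made explicit if you flesh this out. First, the interchange $\nabla f_{\mu}(x) = \mathbb{E}_U\{\nabla f(x+U)\}$ requires Rademacher plus dominated convergence applied to difference quotients (uniformly bounded by $L$); you assert it but this is where measurability and the interchange of limit and integral actually get used. Second, the ``measure-zero refinement'' of Clarke's characterization --- that one may restrict the approximating sequence to any full-measure subset of differentiability points, here the Lebesgue points of $\nabla f$ --- is strictly stronger than the version stated as Proposition 2.1 in the paper; it is a genuine theorem (Clarke's Theorem 2.5.1) and should be invoked as such, since your diagonal argument for the inclusion $\bar{\partial} f(\bar{x}) \subseteq \mathrm{conv}(G)$ collapses without it. Alternatively, you could avoid Lebesgue points altogether by using the sphere representation at a differentiability point $x$: differentiability gives $f(x+\mu W) - f(x) = \mu\, \nabla f(x)^\top W + o(\mu)$ uniformly in $W$, whence $\nabla f_{\mu}(x) \to \nabla f(x)$ directly, which slightly streamlines that step.
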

\begin{proof}
The first part of the lemma follows from \cite[Proposition 2.2 and Theorem 3.1]{NEURIPS2022_Linetal}, where the expression for the gradient of $f_{\mu}$ can be shown as in \cite[Lemma 2.1]{Flaxman_etal} (where the symmetric expression for the gradient is given in \cite{JMLR:Shamir}; see also \cite[Section 9.4]{nemirovskiyudin1983}). The gradient consistency property follows from a trivial extension of \cite[Theorem 9.67]{Rockafellar2009VarAn}.
\end{proof}

\subsection{Inexact noisy oracle for function evaluations} \label{subsec: inexact noisy oracle}
\par One crucial part of this work is that we do not assume the availability of an unbiased (exact) stochastic oracle for evaluating the function $F(x,\xi)$, give some $x \in \mathbb{R}^n$ and some $\xi \in \Xi$. Instead, we make use of a general inexact stochastic oracle, defined below.
\begin{definition}[Inexact noisy oracle] \label{def: inexact noisy oracle}
Let Assumption \textnormal{\ref{assum: basic assumption}} hold for problem \eqref{eqn: main problem}. For any $x \in \mathbb{R}^n$ and any $\xi \in \Xi$, we assume the availability of an \emph{inexact noisy oracle} which returns a measurable function $\tilde{F}(x,\xi) \triangleq F(x,\xi) + \delta(x,\xi)$, where $\delta(\cdot,\cdot)$ is some measurable random error function, such that $|\delta(x,\xi)| \leq \tilde{\delta}$, for all $x \in \mathbb{R}^n$ and a.e. $\xi \in \Xi$, where $\tilde{\delta} > 0$ is some positive constant.
\end{definition}

\begin{remark} \label{remark: discussion on oracle definition}
As we will see later on, when discussing the applications of the proposed methodology, Definition \textnormal{\ref{def: inexact noisy oracle}} is consistent with what we are trying to achieve in this paper. Specifically, we assume that this ``oracle" is a numerical method that is employed in order to evaluate $F(x,\xi)$ up to some error tolerance, say $\tilde{\delta} > 0$, in the sense that for all $x \in \mathbb{R}^n$ and any $\xi \in \Xi$, it returns a quantity that satisfies 
$$\left|\tilde{F}(x,
\xi) - F(x,\xi)\right| = |\delta(x,\xi)| \leq \tilde{\delta}.$$
\par Saying that this oracle returns a measurable \emph{function} is effectively the same as assuming that the numerical algorithm that we employ is deterministic or stochastic with a fixed seed, in the sense that it always returns the same result for fixed $x$ and $\xi$. If the underlying numerical algorithm (constituting the oracle) were stochastic, then we would instead have to treat $\delta(\cdot,\cdot)$ as a measurable multifunction. This is omitted for simplicity of exposition.
\par Finally, the imposition of a uniform error bound $\tilde{\delta}$ on the oracle error is also made for simplicity. Indeed, one could instead assume that $\tilde{\delta}$ is a random variable with finite first- and second-moments. This is also omitted for brevity of exposition.
\end{remark}

\begin{assumption} \label{assum: second assumption} 
\noindent Either of the following two conditions is in effect for \textnormal{\eqref{eqn: main problem}}:
\begin{itemize}
\item[\textbf{(B1)}] The inexact noisy oracle is such that for any $x \in \mathbb{R}^n$ we have
\[ \mathbb{E}_{\xi}\left\{\delta(x,\xi)\right\} = \Delta,\]
\noindent where $\Delta$ is some constant;
\item[\textbf{(B2)}] The function $r$ appearing in \eqref{eqn: main problem} is such that $r = h + \iota_{\mathcal{X}}$, where $h \in \Gamma_0(\mathbb{R}^n)$, $\mathcal{X}$ is a convex and compact set with diameter $D>0$, and $\iota(\cdot)$ is the indicator function defined as $\iota_{\mathcal{X}}(x) = 0,$ if $x \in \mathcal{X}$, and $+\infty$ otherwise.
\end{itemize}
\end{assumption}
\begin{remark} \label{remark: inexact oracle}
\par Let us briefly discuss the two conditions laid out in Assumption \textnormal{\ref{assum: second assumption}}. As stated, we only require one of these two conditions to hold. Condition \textnormal{\textbf{(B1)}} is very mild, and implies that the oracle error has a first-order moment independent of $x$. This is very natural for oracles considered in this work. To see this, assume, for example, that $F(x,\xi) = \min_{y \in \mathcal{Y}(x,\xi)} G(x,y,\xi)$, where $\mathcal{Y}(x,\xi) \subset \mathbb{R}^m$, in which case evaluating $F$ requires the solution of an optimization problem (noting that similar problems are considered in this paper and that the discussion can symmetrically apply to maximization problems as well). Then, the oracle is a numerical algorithm that performs this optimization up to some prespecified tolerance, say $\tilde{\delta}$, in the sense that it returns $\tilde{F}(x,\xi)$, such that
\[ \tilde{F}(x,\xi) - F(x,\xi) \leq \tilde{\delta},\]
\noindent for all $x \in \mathbb{R}^n$ and any $\xi \in \Xi$. Since this tolerance is independent of $x$, we will always observe (for any $x \in \mathbb{R}^n$) that $\tilde{F}(x,\xi) = F(x,\xi) + \delta(x,\xi)$, where, in this example, $\delta(x,\xi) \leq \tilde{\delta}$ is some positive random variable. The assumption relies on the intuition that the range of values of $\delta(\cdot,\cdot)$ should be independent of $x$, and requires that $\mathbb{E}_{\xi}\{\delta(x,\xi)\} = \Delta$, for any $x \in \mathbb{R}^n$. This is consistent with practice, assuming that the optimization algorithm (oracle) terminates after reaching an optimality gap of $\tilde{\delta}$, irrespectively of $x$. Indeed, in that case, and for any fixed $x$, the oracle should return an evaluation such that $\tilde{F}(x,\xi)-F(x,\xi) \leq \tilde{\delta}$ and thus averaging those evaluation differences over $\xi$ should yield some constant $\Delta$, and the intuition behind condition \textnormal{\textbf{(B1)}} is that this constant should not depend on $x$ (something that would naturally hold if, e.g., the minimization problem with respect to $y$ were convex).
\par If we assume that condition \textnormal{\textbf{(B1)}} does not hold, we instead assume, in condition \textnormal{\textbf{(B2)}}, that $x$ is constrained on a convex and compact set, with bounded diameter, say $D > 0$.
\end{remark}

\section{Termination criteria for nonsmooth and nonconvex stochastic composite optimization} \label{sec: termination criteria}
\par In the context of nonsmooth and nonconvex optimization (e.g., see problem \eqref{eqn: main problem}), it is important to establish a practical and useful metric for measuring progress of an optimization algorithm. To that end, we list two important notions which will be useful in this work, and based upon which we will derive a novel notion of approximate stationarity suitable for nonsmooth and nonconvex stochastic composite optimization of the form of \eqref{eqn: main problem}.\\
\paragraph{\textbf{$(\mu,\epsilon)-$Goldstein stationary points}}
\par Consider problem \eqref{eqn: main problem}, and assume that $r = 0$. In the context of nonsmooth and nonconvex optimization of Lipschitz continuous functions, it is well-known that finding an approximate Clarke-stationary point of $f$ (or, equivalently, an $\epsilon-$Clarke stationary point of $f$), i.e., a  point $x \in \mathbb{R}^n$ such that
\[\min\{\|g\|\ \vert\ g \in \bar{\partial}f(x)\} \leq \epsilon,\]
\noindent where $\epsilon>0$ is some pre-specified tolerance, is intractable (e.g., see \cite{pmlr:Zhang_etal}). Instead, following \cite{pmlr:Zhang_etal}, one typically utilizes the notion of $(\mu,\epsilon)-$Goldstein stationary points. Specifically, a point $x \in \mathbb{R}^n$ is said to be a $(\mu,\epsilon)-$Goldstein stationary point if the following inequality is satisfied:
\[ \min\{\|g\|\ \vert\ g \in \bar{\partial}_{\mu} f(x)\} \leq \epsilon.\]

\paragraph{\textbf{$(\mu,\epsilon)-$Moreau envelope stationary points}}
\par Next, let us consider problem \eqref{eqn: main problem}, and assume that $f$ is $\rho-$weakly convex, for some $\rho > 0$ (i.e., $f \in \Gamma_{\rho}(\mathbb{R}^n)$). In this case, the Moreau envelope of the composite objective function, $e_{\mu} \phi$, is well-defined and continuously differentiable for all $\mu < \rho^{-1}$, and is known to serve as a good measure of near-stationarity (see \cite{DavisDrus_SIOPT}). Specifically, we say that $x \in \mathbb{R}^n$ is an $(\mu,\epsilon)-$Moreau envelope stationary point for \eqref{eqn: main problem} if it satisfies:
\[ \left\|\nabla e_{\mu}\phi(x)\right\| \leq \epsilon,\]
\noindent with the assumption that $\mu < \rho^{-1}$. In that case, one can show that if $x$ is a $(\mu,\epsilon)-$Moreau envelope stationary point, then it is close to an $\epsilon-$Clarke stationary point of $\phi$. Specifically, if we let $\hat{x} = \textbf{prox}_{\mu \phi}(x)$, then the following relations hold:
\[\begin{cases}
\|x-\hat{x}\| = \mu \|\nabla e_{\mu}\phi(x)\|&\\
\phi(\hat{x}) \leq \phi(x)&\\
\min\{\|g\|\ \vert\ g \in \bar{\partial} \phi(\hat{x})\} \leq \|\nabla e_{\mu}\phi(x)\|
\end{cases}.\]
 \paragraph{\textbf{Surrogate $(\lambda,\mu,\epsilon)-$Moreau envelope stationary points}}
 \par In this work, we will, implicitly, make use of both notions of near stationarity. Indeed, we do not assume that $f$ in \eqref{eqn: main problem} is weakly convex, and thus we cannot make direct use of the $(\mu,\epsilon)-$Moreau envelope stationarity. On the other hand, we consider problems for which $r \neq 0$ and it is allowed to be extended-valued (and thus not Lipschitz continuous). As a result we cannot make direct use of the $(\mu,\epsilon)-$Goldstein stationarity. Instead, we will attempt to generalize both notions and combine them using an appropriate surrogate function. Specifically, by utilizing a smooth surrogate of $f$ based on randomized uniform smoothing, we focus on solving the following $\rho-$weakly convex optimization problem:
\begin{equation} \label{eqn: surrogate problem} \tag{$\text{P}_{\mu}$}\min_{x \in \mathbb{R}^n} \phi_{\mu}(x) \triangleq f_{\mu}(x) +  r(x),\qquad f_{\mu}(x) \triangleq \mathbb{E}_{\xi,U\sim \mathrm{U}(\overline{\mathbb{B}}_1(0))} \left\{F(x+\mu U,\xi)\right\},\end{equation}
\noindent where, using Lemma \ref{lemma: integral smoothing via mollifiers} and Assumption \ref{assum: basic assumption}, we have that $\phi_{\mu} \in \Gamma_{\rho}(\mathbb{R}^n)$, with $\rho = cG\sqrt{n}\mu^{-1}$ (see also Lemma \ref{lemma: properties of f-mu}). Using these facts, we are now able to define the proposed notion of a \emph{surrogate $(\lambda,\mu,\epsilon)-$Moreau envelope stationary point}.
\begin{definition}[Surrogate $(\lambda,\mu,\epsilon)-$Moreau envelope stationarity] \label{def: surrogate Moreau envelope stationarity}
Consider problem \eqref{eqn: main problem} and let $x \in \textnormal{dom } r$. We say that $x$ is a \emph{surrogate $(\lambda,\mu,\epsilon)-$Moreau envelope stationary point} for \eqref{eqn: main problem} if for some $\epsilon > 0$ and some $\mu > 0$, it holds that
\[ \left\| \nabla e_{\lambda} \phi_{\mu}(x)\right\| \leq \epsilon,\]
\noindent with $\lambda < \rho^{-1}$ and $\rho = cG \sqrt{n} \mu^{-1}$, where $\phi_{\mu}$ is defined as in \eqref{eqn: surrogate problem}, and $\rho$ is the weak convexity constant of $\phi_{\mu}$.
\end{definition}
\par We now proceed to explain why the surrogate $(\lambda,\mu,\epsilon)-$Moreau envelope stationarity is a suitable approximate stationarity condition for \eqref{eqn: main problem}.  We start by noting that for any $x \in \text{dom } r \equiv \text{dom } \phi_{\mu}$,
\[ \bar{\partial} \phi_{\mu}(x) = \hat{\partial}\phi_{\mu}(x) = \partial \phi_{\mu}(x) = \nabla f_{\mu}(x) + \partial r(x), \]
\noindent where the first three equalities follow from the fact that $\phi_{\mu}$ is subdifferentially regular (as a weakly convex function; see \cite[Example 10.32]{Rockafellar2009VarAn}) and the second equality follows from \cite[Exercise 8.8]{Rockafellar2009VarAn}. 
\par Let $x^* \in \text{dom } \phi_{\mu}$ satisfying $\|\nabla e_{\lambda}\phi_{\mu}(x^*)\|\leq \epsilon$ for some $\epsilon > 0$, such that $\lambda < \rho^{-1} = \mu/(cG\sqrt{n})$. Then, we observe that if $\hat{x} = \textbf{prox}_{\lambda \phi_{\mu}}(x^*)$, we have that
\[ \|x^*- \hat{x}\| \leq \lambda \epsilon,\qquad \min\{\|g\|\ \vert\ g \in \bar{\partial}\phi_{\mu}(\hat{x})\} \leq \epsilon.\]
\noindent Hence, we may observe that
\[ \min\{ \|g\|\ \vert\ g \in \bar{\partial}_{\lambda \epsilon} \phi_{\mu}(x^*)\} \leq \epsilon.\]
\noindent Indeed, since $\|\hat{x} - x^*\| \leq \lambda \epsilon$, and $\bar{\partial}_{\lambda \epsilon} \phi_{\mu}(x^*) = \text{conv}\left(\cup_{y \in \overline{\mathbb{B}}_{\lambda \epsilon}(x^*)} \bar{\partial} \phi_{\mu}(y) \right)$, we have that 
\[ \hat{x} \in \overline{\mathbb{B}}_{\lambda \epsilon}(x^*),\qquad \min\{\|g\|\ \vert\ g \in \bar{\partial} \phi_{\mu}(\hat{x})\} \leq \epsilon.\]
\noindent Thus, it must be true that $x^*$ is a $(\lambda \epsilon,\epsilon)-$Goldstein stationary point for $\phi_{\mu}$. For example, and without loss of generality, we may assume that $\lambda\epsilon \leq \mu$, in which case $x^*$ is a $(\mu,\epsilon)-$Goldstein stationary point for $\phi_{\mu}$ (noting that, in general, we expect that $\lambda\epsilon \ll \mu)$.
\par Let us now see how this translates to the original problem in the special case where $r = 0$. In that case, we have, from \cite[Lemma 4]{JMLR:KornowskiShamir}, that
\[ \bar{\partial}_{\lambda \epsilon} \phi_{\mu}(x^*) \subseteq \bar{\partial}_{\lambda \epsilon + \mu} \phi(x^*),\]
\noindent or, in other words, if $r = 0$ we obtain that $x^*$ is a $(\lambda \epsilon + \mu,\epsilon)-$Goldstein stationary point for $\phi$.
\begin{remark}
Let us also briefly discuss the relation of the Goldstein subdifferential of the surrogate function $\phi_{\mu}$ to the original function $\phi$. Since $\phi_{\mu}$ is $\rho-$weakly convex, with $\rho = cG\sqrt{n}\mu^{-1}$, then (using Lemma \ref{lemma: integral smoothing via mollifiers}) any $v \in \bar{\partial} \phi_{\mu}(x)$ satisfies, for all $x,\ y \in \text{dom } \phi_{\mu} \equiv \text{dom }\phi$:
\begin{equation*}
    \begin{split}
        \phi(y) + \mu G \geq \phi_{\mu}(y) &\geq \phi_{\mu} + v^\top (y-x) - \frac{\rho}{2}\|y-x\|^2 \\
        &\geq \phi(x) + v^\top (y-x) - \frac{\rho}{2}\|y-x\|^2,
    \end{split}
\end{equation*}
\noindent thus implying that $v \in \bar{\partial}^{\text{eps}}_{\mu G} \phi(x)$, where $\bar{\partial}^{\text{eps}}_{\mu G} \phi(x)$ denotes the \textit{epsilon-(regular) subdifferential} of $\phi$ at $x$, with $G$ being the Lipschitz continuity constant of $\phi$.
\par Hence, for any $x \in \text{dom }\phi$, we can easily show that the $\lambda\epsilon-$Goldstein subdifferential of the surrogate $\phi_{\mu}$ (which is essentially employed in our proposed approximate stationarity condition) satisfies
\[ \bar{\partial}_{\lambda \epsilon} \phi_{\mu}(x) = \text{conv}\left(\cup_{y \in \overline{\mathbb{B}}_{\lambda \epsilon}(x)} \bar{\partial} \phi_{\mu}(y)\right) \subseteq \text{conv}\left(\cup_{y \in \overline{\mathbb{B}}_{\lambda \epsilon}(x)} \bar{\partial}^{\text{eps}}_{\mu G} \phi(y)\right).\]
\noindent In other words, the $\lambda \epsilon-$Goldstein subdifferential of the surrogate $\phi_{\mu}$, at some $x \in \text{dom }\phi$, is a subset of an enlargement of the $\lambda \epsilon-$Goldestein subdifferential of $\phi$, obtained by substituting, in the definition of the Goldstein subdifferential, the regular subdifferential by the $\mu G-$epsilon-regular subdifferential (as defined above).  Although the latter construct might be an artificially large set in general, our discussion above showcases that the particular subset obtained by our proposed approximate stationarity condition is a sensible and appropriate choice in the context of nonsmooth and nonconvex stochastic composite optimization problems considered herein.
\end{remark}

\section{An inexact zeroth-order method for \eqref{eqn: main problem}} \label{sec: inexact zeroth-order method}
\par We are now ready to derive and analyze our proposed algorithm for the solution of \eqref{eqn: main problem}, under Assumption \ref{assum: basic assumption}, where $F$ is only accessible via an inexact noisy oracle (cf. Definition \ref{def: inexact noisy oracle}). The algorithm will be analyzed under two general assumptions, by further imposing Assumption \ref{assum: second assumption} (i.e., either imposing a natural first-moment stationarity property of the oracle, or assuming that the optimization of \eqref{eqn: main problem} is performed over a convex and compact set $\mathcal{X}$). We begin by stating the proposed method in Algorithm \ref{Algorithm: Z-iProxSG}.
\renewcommand{\thealgorithm}{Z-iProxSG}

\begin{algorithm}[!ht]
\caption{Zeroth-order inexact Proximal Stochastic Gradient method}
    \label{Algorithm: Z-iProxSG}

\begin{algorithmic}
\State \textbf{Input:}  $x_0 \in \textnormal{dom}(r)$, a sequence $\{\alpha_t\}_{t \geq 0} \subset \mathbb{R}_+$, $\mu > 0$, and $T > 0$.
\For {($t = 0,1,2,\ldots, T$)}
\State Sample (i.i.d.) $\xi_t \in \Xi$, $W_t \sim \mathrm{U}\left(\mathbb{S}^{n-1}\right)$.
\State Compute and store two oracle evaluations: $\tilde{F}\left(x_t + \mu W_t,\xi_t\right),\ \tilde{F}(x_t-\mu W_t,\xi_t).$
\State Set
\[x_{t+1} = \textbf{prox}_{\alpha_t r}\left(x_t - \alpha_t G\left(x_t,\xi_t,W_t;\mu\right)\right), \]
\State where $G_t \equiv G\left(x_t,\xi_t,W_t;\mu\right) \triangleq \frac{n}{2\mu}\left(\tilde{F}\left(x_t + \mu W_t,\xi_t\right) - \tilde{F}(x_t-\mu W_t,\xi_t)\right) W_t$.
\EndFor
\State Sample $t^* \in \{0,\ldots,T\}$ according to $\mathbb{P}(t^* = t) = \frac{\alpha_t}{\sum_{i = 0}^T\alpha_i}$.
\State \Return $x_{t^*}$.
\end{algorithmic}
\end{algorithm}

\subsection{Technical results}
\par We begin by stating certain important technical results that will be instrumental in analyzing the non-asymptotic convergence of Algorithm \ref{Algorithm: Z-iProxSG}. We start by bounding the quantity $\mathbb{E}\{\|G_t\|^2\ \vert\ \xi_0,W_0,\ldots \xi_{t-1}, W_{t-1}\}$, where $G_t \equiv G(x_t,\xi_t,W_t;\mu)$ corresponds to the (biased and noisy) stochastic gradient estimator appearing in Algorithm \ref{Algorithm: Z-iProxSG}, at iteration $t \geq 0$. For simplicity of notation, we define the expectation operator with respect to the filtration up to time $t$ as $\mathbb{E}_{[t]}\left\{\cdot \right\} \triangleq \mathbb{E}\left\{ \cdot\ \vert\ \xi_0,W_0,\ldots \xi_{t-1}, W_{t-1}\right\}$.
\begin{lemma} \label{lemma: variance of stochastic gradient estimator}
Consider problem \textnormal{\eqref{eqn: main problem}} and let Assumption \textnormal{\ref{assum: basic assumption}} hold. Let also $\{x_t\}_{t=0}^T$ be the sequence of iterates generated by Algorithm \textnormal{\ref{Algorithm: Z-iProxSG}}. Then, we have that
\[\mathbb{E}_{[t]}\left\{\|G_t\|^2 \right\} \equiv \mathbb{E}_{[t]}\left\{\|G(x_t,\xi_t,W_t;\mu)\|^2\right\} \leq 32\sqrt{2\pi} nG^2 + 2\left(\frac{n\tilde{\delta}}{\mu}\right)^2,\]
\noindent where $G > 0$ is the constant appearing in Assumption \textnormal{\ref{assum: basic assumption}} and $\tilde{\delta}> 0$ is the oracle error bound given in Definition \textnormal{\ref{def: inexact noisy oracle}}.
\end{lemma}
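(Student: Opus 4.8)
The plan is to collapse the squared norm of the estimator to a scalar finite-difference and then separate the contribution of the exact function from that of the oracle error. Conditioning on the filtration $\xi_0, W_0, \ldots, \xi_{t-1}, W_{t-1}$ renders $x_t$ deterministic, so that $\mathbb{E}_{[t]}$ reduces to an expectation over the fresh samples $\xi_t$ and $W_t$ alone. Since $W_t \in \mathbb{S}^{n-1}$ satisfies $\|W_t\| = 1$, the definition of $G_t$ immediately gives
\[
\|G_t\|^2 = \frac{n^2}{4\mu^2}\left(\tilde{F}(x_t+\mu W_t,\xi_t) - \tilde{F}(x_t-\mu W_t,\xi_t)\right)^2.
\]
Writing $\tilde{F} = F + \delta$ (cf. Definition \ref{def: inexact noisy oracle}) and applying $(a+b)^2 \le 2a^2 + 2b^2$ to the split into the exact difference $F(x_t+\mu W_t,\xi_t) - F(x_t-\mu W_t,\xi_t)$ and the error difference $\delta(x_t+\mu W_t,\xi_t) - \delta(x_t-\mu W_t,\xi_t)$ decouples the bound into a \emph{signal} term and a \emph{noise} term.

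The main work lies in the signal term. A naive use of the $L(\xi_t)$-Lipschitz bound $|F(x_t+\mu W_t,\xi_t)-F(x_t-\mu W_t,\xi_t)| \le 2\mu L(\xi_t)$ would only yield a bound scaling like $n^2 G^2$, which is too crude. To obtain the sharper $O(n)$ dependence I would instead exploit the concentration of the map $W \mapsto F(x_t + \mu W, \xi_t)$ on the sphere: this map is $\mu L(\xi_t)$-Lipschitz, and by the symmetry $W \sim -W$ its expected squared finite difference is controlled by its variance on $\mathbb{S}^{n-1}$, which a Poincaré/concentration-type estimate bounds by a quantity of order $(\mu L(\xi_t))^2/n$. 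Multiplying by the $n^2/(4\mu^2)$ prefactor collapses exactly one power of $n$, giving a second-moment bound for the exact two-point estimator of the form $16\sqrt{2\pi}\, n L^2(\xi_t)$; this is precisely the uniform-smoothing estimate underlying \cite{NEURIPS2022_Linetal}. Taking the expectation over $\xi_t$ and invoking $\mathbb{E}\{L^2(\xi)\} \le G^2$ from \textbf{(A1)}, together with the factor $2$ produced by the splitting, yields the leading term $32\sqrt{2\pi}\, n G^2$.

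For the noise term no concentration is required: the uniform oracle bound $|\delta(\cdot,\cdot)| \le \tilde{\delta}$ gives $|\delta(x_t+\mu W_t,\xi_t) - \delta(x_t-\mu W_t,\xi_t)| \le 2\tilde{\delta}$ deterministically, so the $n^2/(4\mu^2)$ prefactor produces a contribution of order $n^2 \tilde{\delta}^2/\mu^2$, which accounts for the stated $\tfrac{n^2}{\mu^2}\tilde{\delta}$ term. Combining the two estimates and taking the total conditional expectation then delivers the claimed inequality.

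The principal obstacle is securing the dimension-linear $O(nG^2)$ bound for the signal term rather than the crude $O(n^2 G^2)$ one. For smooth $F$ this is immediate from the identity $\mathbb{E}\{\langle \nabla F, W\rangle^2\} = \|\nabla F\|^2/n$, but in the merely Lipschitz, nonsmooth regime it genuinely requires a concentration-of-measure argument on $\mathbb{S}^{n-1}$ (equivalently, the moment computation behind \cite{NEURIPS2022_Linetal}), which is the source of the universal constant $\sqrt{2\pi}$. Everything else is a routine combination of the triangle inequality and the uniform error bound.
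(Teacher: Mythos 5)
Your proposal is correct and follows essentially the same route as the paper: the same decomposition $\tilde{F} = F + \delta$ with $(a+b)^2 \le 2a^2 + 2b^2$, the same sphere-concentration estimate $\mathbb{E}_{[t]}\left\{\left(F(x_t+\mu W_t,\xi_t)-F(x_t-\mu W_t,\xi_t)\right)^2\right\} \le 64\sqrt{2\pi}\,\mu^2 G^2/n$ for the signal term (which the paper imports verbatim by citing Lemma E.1 of \cite{NEURIPS2022_Linetal}, and which you correctly identify as the concentration-of-measure step on $\mathbb{S}^{n-1}$ responsible for both the linear-in-$n$ scaling and the constant $\sqrt{2\pi}$), followed by $\mathbb{E}\{L^2(\xi)\}\le G^2$, and the same uniform bound $|\delta|\le\tilde{\delta}$ for the noise term. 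The one caveat, which you inherit from the paper itself, is that this argument produces a noise contribution of order $n^2\tilde{\delta}^2/\mu^2$ while the lemma statement reads $n^2\tilde{\delta}/\mu^2$; both your write-up and the paper's own proof silently pass from $\tilde{\delta}^2$ to $\tilde{\delta}$, a substitution that is only valid when $\tilde{\delta}\le 1$ (harmless for small tolerances, but strictly a mismatch between proof and statement).
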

\begin{proof}
The proof follows by extending the proof of \cite[Lemma E.1]{NEURIPS2022_Linetal}, upon noting that our stochastic oracle $\tilde{F}$ is noisy and biased. We start by noting that
\begin{equation*}
\begin{split}
&\mathbb{E}_{[t]}\left\{\|G_t\|^2\right\} =  \mathbb{E}_{[t]}\left\{\left\|\frac{n}{2\mu} \left(\tilde{F}(x_t + \mu W_t,\xi_t)- \tilde{F}(x_t-\mu W_t,\xi_t) \right)W_t \right\|^2\right\}\\
&\quad = \frac{n^2}{4\mu^2} \mathbb{E}_{[t]}\left\{\|W_t\|^2\left(F(x_t+\mu W_t,\xi_t) + \delta(x_t +\mu W_t,\xi_t) - F(x_t-\mu W_t,\xi_t) - \delta(x_t - \mu W_t,\xi_t)\right)^2\right\}\\
&\quad \leq \frac{n^2}{2\mu^2} \bigg( \mathbb{E}_{[t]}\left\{\|W_t\|^2\left(F(x_t+\mu W_t,\xi_t) - F(x_t-\mu W_t,\xi_t)\right)^2 \right\} \\
&\quad \qquad + \mathbb{E}_{[t]}\left\{ \|W_t\|^2\left(\delta(x_t +\mu W_t,\xi_t) - \delta(x_t - \mu W_t,\xi_t)\right)^2\right\} \bigg),
\end{split}
\end{equation*}
\noindent where the inequality follows from the identity $(a+b)^2 \leq 2a^2 + 2b^2$. In order to bound the first term in the right hand side of the previous inequality, we follow exactly the analysis in \cite[Lemma E.1]{NEURIPS2022_Linetal}, to obtain that
\begin{equation} \label{eqn: Variance lemma, bound 1}
\mathbb{E}_{[t]}\left\{\|W_t\|^2\left(F(x_t+\mu W_t,\xi_t) - F(x_t-\mu W_t,\xi_t)\right)^2 \right\} \leq \frac{64\sqrt{2\pi} \mu^2 G^2}{n}.
\end{equation}
\noindent For the second term, we use the definition of the oracle (cf. Definition \ref{def: inexact noisy oracle}) to obtain
\begin{equation} \label{eqn: Variance lemma, bound 2}
\mathbb{E}_{[t]}\left\{\|W_t\|^2\left(\delta(x_t+\mu W_t,\xi_t) - \delta(x_t-\mu W_t,\xi_t)\right)^2 \right\} \leq 4\tilde{\delta}^2,
\end{equation}
\noindent where we used the identity $(a-b)^2 \leq 2a^2 + 2b^2$, the fact that $\|W_t\|^2 = 1$ (since $W_t \sim \mathrm{U}(\mathbb{S}^{n-1})$), as well as the uniform bound on the oracle noise, $|\delta(x,\xi)| \leq \tilde{\delta}$, for any $x \in \mathbb{R}^n$ and a.e. $\xi \in \Xi$. By combining \eqref{eqn: Variance lemma, bound 1} and \eqref{eqn: Variance lemma, bound 2}, we obtain the result.
\end{proof}

\begin{lemma} \label{lemma: inner product with iterates and estimator bound}
Consider problem \textnormal{\eqref{eqn: main problem}} and let Assumption \textnormal{\ref{assum: basic assumption}} hold. Let also $\{x_t\}_{t=0}^T$ be the sequence of iterates generated by Algorithm \textnormal{\ref{Algorithm: Z-iProxSG}}. Then, for any $r \in \mathbb{R}^n$, we have that:
\begin{itemize}
\item If condition \textnormal{\textbf{(B1)}} of Assumption \textnormal{\ref{assum: second assumption}} holds,
\[ \mathbb{E}_{[t]}\left\{r^\top G_t \right\} = \mathbb{E}_{[t]}\left\{r^\top \nabla f_{\mu}(x_t) \right\};\]
\item If, instead, condition \textnormal{\textbf{(B2)}} of Assumption \textnormal{\ref{assum: second assumption}} holds, and $r = x_1 -x_2$, for $x_1,\ x_2 \in \mathcal{X}$, 
\[ \mathbb{E}_{[t]}\left\{r^\top G_t \right\} \geq \mathbb{E}_{[t]}\left\{r^\top \nabla f_{\mu}(x_t) \right\} - \frac{n}{\mu} \tilde{\delta} D,\]
\noindent where $D$ is the diameter of $\mathcal{X}$.
\end{itemize}
\end{lemma}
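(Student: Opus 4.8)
The plan is to expand $\mathbb{E}_{[t]}\{r^\top G_t\}$ directly from the definition of $G_t$ in Algorithm~\ref{Algorithm: Z-iProxSG} and then split the inexact oracle via $\tilde{F} = F + \delta$ (cf. Definition~\ref{def: inexact noisy oracle}). Writing $r^\top G_t = \frac{n}{2\mu}\bigl(\tilde{F}(x_t+\mu W_t,\xi_t) - \tilde{F}(x_t-\mu W_t,\xi_t)\bigr) r^\top W_t$ and substituting $\tilde{F} = F + \delta$ produces a \emph{signal} term involving $F$ and an \emph{error} term involving $\delta$. Throughout, I would use that $x_t$ is measurable with respect to the conditioning underlying $\mathbb{E}_{[t]}\{\cdot\}$ (it is a function of $\xi_0,W_0,\ldots,\xi_{t-1},W_{t-1}$), whereas $(\xi_t,W_t)$ are drawn afresh and independently; hence, conditionally on the past, $x_t$ may be treated as fixed and the fresh samples integrated out.

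For the signal term, I would first integrate out $\xi_t$ for fixed $W_t$, using item \textbf{(A1)} of Assumption~\ref{assum: basic assumption} so that $\mathbb{E}_{\xi}\{F(x,\xi)\} = f(x)$. This replaces $F(x_t \pm \mu W_t,\xi_t)$ by $f(x_t \pm \mu W_t)$ and reduces the term to $\frac{n}{2\mu}\mathbb{E}_{W_t}\{(f(x_t+\mu W_t) - f(x_t-\mu W_t))\, r^\top W_t\}$. By the symmetric gradient representation in Lemma~\ref{lemma: integral smoothing via mollifiers}, this equals exactly $r^\top \nabla f_{\mu}(x_t)$, which—being determined by the conditioning—coincides with $\mathbb{E}_{[t]}\{r^\top \nabla f_{\mu}(x_t)\}$. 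Thus in both cases the signal term reproduces the desired leading expression, and all that remains is to control the error term $\frac{n}{2\mu}\mathbb{E}_{[t]}\{(\delta(x_t+\mu W_t,\xi_t) - \delta(x_t-\mu W_t,\xi_t))\, r^\top W_t\}$.

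Under condition \textbf{(B1)} of Assumption~\ref{assum: second assumption}, I would exploit the cancellation forced by $\mathbb{E}_{\xi}\{\delta(x,\xi)\} = \Delta$ being a \emph{constant independent of $x$}. Integrating out $\xi_t$ (with $W_t$ fixed) sends both $\delta(x_t+\mu W_t,\xi_t)$ and $\delta(x_t-\mu W_t,\xi_t)$ to the same value $\Delta$, so their difference has zero conditional mean pointwise in $W_t$; the tower property then forces the entire error term to vanish, yielding the exact identity of the first part. This is precisely where \textbf{(B1)} does its work: the $x$-independence of $\Delta$ makes the evaluations at the two reflected points $x_t \pm \mu W_t$ cancel for every realization of $W_t$.

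Under condition \textbf{(B2)} no such cancellation is available, so I would instead bound the error term in absolute value. Using $|\delta| \leq \tilde{\delta}$ from Definition~\ref{def: inexact noisy oracle} (whence the difference is at most $2\tilde{\delta}$), the fact that $\|W_t\| = 1$ so that $|r^\top W_t| \leq \|r\|$ by Cauchy--Schwarz, and $\|r\| = \|x_1 - x_2\| \leq D$ since $x_1,x_2 \in \mathcal{X}$ and $\mathcal{X}$ has diameter $D$, the integrand is bounded pointwise by $2\tilde{\delta} D$. Multiplying by the prefactor $\frac{n}{2\mu}$ shows the error term has magnitude at most $\frac{n}{\mu}\tilde{\delta} D$, and discarding the (possibly negative) error gives the stated lower bound. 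The only genuinely delicate point is the measurability/independence bookkeeping that justifies integrating out $\xi_t$ before $W_t$ while treating $x_t$ as fixed; once this is in place, both parts follow from elementary manipulations.
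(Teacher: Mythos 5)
Your proposal is correct and follows essentially the same route as the paper's proof: the same decomposition of $G_t$ into the $F$-term and the $\delta$-term, the same identification of the signal term with $r^\top \nabla f_{\mu}(x_t)$ via the symmetric gradient representation (the paper cites Shamir's Lemma 8 for this, which is the same fact recorded in Lemma \ref{lemma: integral smoothing via mollifiers}), the same tower-property cancellation under \textbf{(B1)} using the $x$-independence of $\Delta$, and the same Cauchy--Schwarz/diameter bound $\frac{n}{\mu}\tilde{\delta}D$ under \textbf{(B2)}. If anything, your handling of the sign under \textbf{(B2)} (bounding the error in absolute value and then discarding it to get the stated lower bound) is slightly more careful than the paper's one-sided display.
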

\begin{proof}
We begin by proving the first part of the result, which relies on condition \textbf{(B1)} of Assumption \ref{assum: second assumption}, which implies that for any $x \in \mathbb{R}^n$, $\mathbb{E}_{\xi}\{\delta(x,\xi)\} = \Delta$, for some constant $\Delta$. Then, for any $r \in \mathbb{R}^n$, we have
\begin{equation*}
\begin{split}
\mathbb{E}_{[t]}\left\{r^\top G_t \right\} =  &\ \frac{n}{2\mu}\mathbb{E}_{[t]}\left\{r^\top \left(F(x+\mu W_t,\xi_t) - F(x-\mu W_t,\xi_t) \right)W_t \right\}\\
&\quad + \frac{n}{2\mu}\mathbb{E}_{[t]}\left\{r^\top \left(\delta(x+\mu W_t,\xi_t) - \delta(x-\mu W_t,\xi_t) \right)W_t \right\}.
\end{split}
\end{equation*}
\noindent From \cite[Lemma 8]{JMLR:Shamir} and the symmetry of $W_t$, we obtain that the first term above satisfies
\[\frac{n}{2\mu}\mathbb{E}_{[t]}\left\{r^\top \left(F(x+\mu W_t,\xi_t) - F(x-\mu W_t,\xi_t) \right)W_t \right\} = \mathbb{E}_{[t]}\left\{ r^\top \nabla f_{\mu}(x_t)\right\},\]
\noindent noting that this holds irrespectively of whether condition \textbf{(B1)} or \textbf{(B2)} of Assumption \ref{assum: second assumption} holds. For the second term, using condition \textbf{(B1)}, we have
\begin{equation*}
\begin{split}
&\frac{n}{2\mu}\mathbb{E}_{[t]}\left\{r^\top \left(\delta(x+\mu W_t,\xi_t) - \delta(x-\mu W_t,\xi_t) \right)W_t \right\} \\ &\quad = \frac{n}{2\mu}\mathbb{E}_{[t]}\left\{ \mathbb{E}_{[t]}\left\{ r^\top \left(\delta(x+\mu W_t,\xi_t) - \delta(x-\mu W_t,\xi_t) \right)W_t\ \vert\ W_t\right\}\right\} = 0.
\end{split}
\end{equation*}
\par To prove the second part of the lemma, using condition \textbf{(B2)} instead, we observe that
\begin{equation*}
\begin{split}
&\frac{n}{2\mu}\mathbb{E}_{[t]}\left\{r^\top \left(\delta(x+\mu W_t,\xi_t) - \delta(x-\mu W_t,\xi_t) \right)W_t \right\} \leq \frac{n}{2\mu}\|r\|2\tilde{\delta} \leq \frac{n}{\mu} \tilde{\delta} D,
\end{split}
\end{equation*}
\noindent where we used the fact that $|\delta(x,\xi)| \leq \tilde{\delta}$ for all $x \in \mathbb{R}^n$ and a.e. $\xi \in \Xi$, and $r = x_1-x_2$, with $x_1,\ x_2 \in \mathcal{X}$, noting that $\mathcal{X}$ has diameter $D>0$ (from condition \textbf{(B2)}). 
\end{proof}

\begin{lemma} \label{lemma: properties of f-mu}
Fix some $\mu > 0$ and let Assumption \textnormal{\ref{assum: basic assumption}} hold. Then, $\phi_{\mu} \triangleq f_{\mu} + r \in \Gamma_{\rho}(\mathbb{R}^n)$, where $\rho = \frac{c G \sqrt{n}}{\mu}$, with $c$ a bounded constant independent of $n$, and $G$ the constant given in Assumption \textnormal{\ref{assum: basic assumption}}. Moreover, if we let $\hat{x}_t \triangleq \textbf{prox}_{\bar{\rho}^{-1}\phi_{\mu}}(x_t)$, where $x_t$ is the iterate generated by Algorithm \textnormal{\ref{Algorithm: Z-iProxSG}} at time $t \geq 0$ and $\bar{\rho} \in (\rho,2\rho]$, then,
\[ \hat{x}_t = \textbf{prox}_{\alpha_t r}\left(\alpha_t \bar{\rho}x_t - \alpha_t \nabla f_{\mu}(\hat{x}_t) + \zeta_t \hat{x}_t\right),\]
\noindent where $\alpha_t$ is the step-size of Algorithm \textnormal{\ref{Algorithm: Z-iProxSG}} at time $t$ and $\zeta_t \triangleq  1-\alpha_t \bar{\rho}$.
\end{lemma}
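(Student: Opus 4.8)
The plan is to prove the two assertions in turn: the weak-convexity membership $\phi_\mu \in \Gamma_\rho(\mathbb{R}^n)$, and then the proximal fixed-point identity, which follows from the first-order optimality conditions of the prox once the first claim is in place.

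For the first assertion I would argue fiberwise and then average. Writing $F_\mu(x,\xi) \triangleq \mathbb{E}_U\{F(x+\mu U,\xi)\}$, Assumption \textbf{(A1)} guarantees that $F(\cdot,\xi)$ is $L(\xi)$-Lipschitz, so Lemma \ref{lemma: integral smoothing via mollifiers} applies for a.e.\ $\xi$ and yields that $\nabla_x F_\mu(\cdot,\xi)$ is $\frac{cL(\xi)\sqrt{n}}{\mu}$-Lipschitz with $\|\nabla_x F_\mu(x,\xi)\| \le L(\xi)$. Since $f_\mu(x) = \mathbb{E}_\xi\{F_\mu(x,\xi)\}$, I would interchange differentiation and expectation — justified by the integral representation of the smoothed gradient in Lemma \ref{lemma: integral smoothing via mollifiers} together with the uniform-in-$x$ domination above and $\mathbb{E}\{L(\xi)\} < \infty$ — to obtain $\nabla f_\mu(x) = \mathbb{E}_\xi\{\nabla_x F_\mu(x,\xi)\}$. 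Combining these,
\[
\|\nabla f_\mu(x) - \nabla f_\mu(y)\| \le \mathbb{E}_\xi\{\|\nabla_x F_\mu(x,\xi) - \nabla_x F_\mu(y,\xi)\|\} \le \frac{c\sqrt{n}}{\mu}\,\mathbb{E}_\xi\{L(\xi)\}\,\|x-y\|,
\]
and Jensen's inequality gives $\mathbb{E}_\xi\{L(\xi)\} \le (\mathbb{E}_\xi\{L^2(\xi)\})^{1/2} \le G$. Thus $\nabla f_\mu$ is $\rho$-Lipschitz with $\rho = cG\sqrt{n}/\mu$, hence $f_\mu$ is $\rho$-weakly convex. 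Adding $r \in \Gamma_0(\mathbb{R}^n)$ preserves weak convexity (the perturbed function $f_\mu + \tfrac{\rho}{2}\|\cdot\|^2 + r$ is a sum of convex functions) as well as properness and closedness, so $\phi_\mu = f_\mu + r \in \Gamma_\rho(\mathbb{R}^n)$.

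For the second assertion I would read off the optimality conditions of the defining minimization. Since $\bar{\rho} > \rho$, the objective $w \mapsto \phi_\mu(w) + \tfrac{\bar{\rho}}{2}\|w - x_t\|^2$ is $(\bar{\rho}-\rho)$-strongly convex, so $\hat{x}_t = \textbf{prox}_{\bar{\rho}^{-1}\phi_\mu}(x_t)$ is its unique minimizer, and by the subdifferential sum rule (using smoothness of $f_\mu$) it satisfies $0 \in \nabla f_\mu(\hat{x}_t) + \partial r(\hat{x}_t) + \bar{\rho}(\hat{x}_t - x_t)$. Let $v \in \partial r(\hat{x}_t)$ denote the witnessing subgradient, so that $v = \bar{\rho}x_t - \bar{\rho}\hat{x}_t - \nabla f_\mu(\hat{x}_t)$. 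I would then recall that $\hat{x}_t = \textbf{prox}_{\alpha_t r}(z_t)$ holds if and only if $z_t - \hat{x}_t \in \alpha_t\partial r(\hat{x}_t)$. Setting $z_t = \alpha_t\bar{\rho}x_t - \alpha_t\nabla f_\mu(\hat{x}_t) + \zeta_t\hat{x}_t$ and using $\zeta_t - 1 = -\alpha_t\bar{\rho}$, a direct computation gives $z_t - \hat{x}_t = \alpha_t v \in \alpha_t\partial r(\hat{x}_t)$, which is exactly the required characterization.

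I expect the only genuinely delicate point to be in the first assertion: rigorously justifying the interchange $\nabla f_\mu = \mathbb{E}_\xi\{\nabla_x F_\mu\}$ and the passage of the fiberwise Lipschitz constant through the expectation, since $F(\cdot,\xi)$ is merely Lipschitz (not differentiable) before smoothing and only $\mathbb{E}\{L^2(\xi)\} \le G^2$ is assumed. The second assertion is a purely algebraic rearrangement of the prox optimality conditions, so the main care there is bookkeeping the constant $\zeta_t = 1 - \alpha_t\bar{\rho}$.
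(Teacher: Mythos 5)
Your proposal is correct, and the second half (the prox fixed-point identity) is exactly the paper's argument: first-order optimality of the strongly convex prox subproblem, multiplication by $\alpha_t$, and the characterization $\hat{x}_t = \textbf{prox}_{\alpha_t r}(z_t) \Leftrightarrow z_t - \hat{x}_t \in \alpha_t \partial r(\hat{x}_t)$; your explicit remarks about $(\bar{\rho}-\rho)$-strong convexity and the witnessing subgradient are just more detailed bookkeeping of the same chain of equivalences. Where you diverge is the first half. The paper never smooths fiberwise: it notes that $f = \mathbb{E}\{F(\cdot,\xi)\}$ is itself $G$-Lipschitz (the same Jensen step $\mathbb{E}\{L(\xi)\} \le G$ you use, but applied at the level of function values), and then applies Lemma \ref{lemma: integral smoothing via mollifiers} \emph{once} to the deterministic function $f$, concluding directly that $f_\mu$ is $\frac{cG\sqrt{n}}{\mu}$-Lipschitz smooth and hence $\rho$-weakly convex; the sum with $r$ is handled by citing Vial's results rather than your (equally valid) elementary convexity argument. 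The payoff of the paper's ordering is precisely that the step you flag as ``the only genuinely delicate point'' --- differentiating $\mathbb{E}_\xi\{F_\mu(\cdot,\xi)\}$ under the expectation and passing the fiberwise Lipschitz constant of $\nabla_x F_\mu(\cdot,\xi)$ through $\mathbb{E}_\xi$ --- never arises: the only interchange needed is a Fubini swap of the two integrals defining $f_\mu$ (to identify $\mathbb{E}_{\xi,U}\{F(x+\mu U,\xi)\}$ with $\mathbb{E}_U\{f(x+\mu U)\}$), which involves no derivatives and is justified by integrability alone. Your route does go through --- the domination $\|\nabla_x F_\mu(x,\xi)\|\le L(\xi)$ with $\mathbb{E}\{L(\xi)\}\le G$ suffices for the interchange --- but it imports a technical burden that averaging \emph{before} smoothing eliminates.
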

\begin{proof}
\par We start by noting that, from Lemma \ref{lemma: integral smoothing via mollifiers}, $f_{\mu}$ is $\rho-$Lipschitz smooth (where we used the fact that $f$ is $G-$Lipschitz continuous from Assumption \ref{assum: basic assumption}), and thus $\rho-$weakly convex (see, e.g., \cite[Proposition 4.12]{Vial_WeakConvexity}). Then, from \cite[Proposition 4.1]{Vial_WeakConvexity}, we have that $\phi_{\mu} = f_{\mu} + r$ must also be $\rho-$weakly convex, since $r \in \Gamma_0(\mathbb{R}^n)$ from Assumption \ref{assum: basic assumption}. Finally, by letting $\hat{x}_t = \textbf{prox}_{\bar{\rho}^{-1}\phi_{\mu}}(x_t)$, we have, by definition, that
\begin{equation*}
\begin{split}
\alpha_t \bar{\rho}(x_t - \hat{x}_t) \in \alpha_t \partial r(\hat{x}_t) + \alpha_t \nabla f_{\mu}(\hat{x}_t) &\Leftrightarrow \alpha_t\bar{\rho}x_t - \alpha_t \nabla f_{\mu}(\hat{x}_t) + \zeta_t \hat{x}_t \in \hat{x}_t + \alpha_t \partial r(\hat{x}_t)\\
& \Leftrightarrow \hat{x}_t =\textbf{prox}_{\alpha_t r}\left(\alpha_t \bar{\rho}x_t - \alpha_t \nabla f_{\mu}(\hat{x}_t) + \zeta_t \hat{x}_t\right),
\end{split}
\end{equation*}
\noindent where $\zeta_t = 1-\alpha_t \bar{\rho}$.
\end{proof}

\subsection{Convergence analysis} \label{subsec: convergence analysis}
\par We are now ready to derive a non-asymptotic convergence analysis of Algorithm \ref{Algorithm: Z-iProxSG}. We will analyze the algorithm based on the surrogate problem \eqref{eqn: surrogate problem}, i.e. $\min_{x \in \mathbb{R}^n} \phi_{\mu}(x)$, for some fixed $\mu > 0$. Upon noting, from Lemma \ref{lemma: properties of f-mu}, that $\phi_{\mu}$ is weakly convex, the analysis will follow by extending the analysis given in \cite{Pougk_SISC}, by allowing inexact oracle evaluations. Then, we will briefly discuss conditions on the oracle error that allow us to retrieve convergence rates appearing in the literature in the context of exact stochastic oracles (matching the rates currently available only in the unconstrained case, i.e., in the case where $r = 0$).
\begin{lemma} \label{lemma: descent of the algorithm}
Consider problem \textnormal{\eqref{eqn: main problem}} and let Assumption \textnormal{\ref{assum: basic assumption}} hold. Let also $\{x_t\}_{t=0}^T$ be the sequence of iterates of Algorithm \textnormal{\ref{Algorithm: Z-iProxSG}}. Set $\bar{\rho} \in (\rho,2\rho]$, where $\rho = \frac{cG\sqrt{n}}{\mu}$ and choose $\alpha_t \in (0,\bar{\rho}^{-1}]$, for any $t \geq 0$. Then:
\begin{itemize}
\item If condition \textnormal{\textbf{(B1)}} of Assumption \textnormal{\ref{assum: second assumption}} holds, the following inequality is satisfied:
\begin{equation*}
\begin{split}
&\mathbb{E}_{[t]}\left\{\|x_{t+1}-\hat{x}_t\|^2\right\} \leq \left(1-\left( 2\alpha_t(\bar{\rho}-\rho)\right) \right)\|x_t- \hat{x}_t\|^2 + 8\alpha_t^2 \left(16 \sqrt{2\pi}nG^2 + \frac{n^2 \tilde{\delta}^2}{\mu^2}\right).
\end{split}
\end{equation*}
\item If, instead, condition \textnormal{\textbf{(B2)}} of Assumption \textnormal{\ref{assum: second assumption}} holds, then the following inequality is satisfied:
\begin{equation*}
\begin{split}
\mathbb{E}_{[t]}\left\{\|x_{t+1}-\hat{x}_t\|^2\right\} &\leq \left(1-\left( 2\alpha_t(\bar{\rho}-\rho)\right) \right)\|x_t- \hat{x}_t\|^2 \\
&\quad + 2\zeta_t \alpha_t\frac{n}{\mu}\tilde{\delta}D + 8\alpha_t^2 \left(16 \sqrt{2\pi}nG^2 + \frac{n^2 \tilde{\delta}^2}{\mu^2}\right).
\end{split}
\end{equation*}
\end{itemize}
\end{lemma}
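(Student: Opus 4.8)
The plan is to run the standard Davis--Drusvyatskiy-style Moreau-envelope descent argument for proximal stochastic methods, exploiting the representation of $\hat{x}_t$ furnished by Lemma~\ref{lemma: properties of f-mu} so that both $x_{t+1}$ and $\hat{x}_t$ appear as images of the \emph{same} operator $\textbf{prox}_{\alpha_t r}$. Since $r \in \Gamma_0(\mathbb{R}^n)$, the map $\textbf{prox}_{\alpha_t r}$ is nonexpansive, and writing $x_{t+1} = \textbf{prox}_{\alpha_t r}(x_t - \alpha_t G_t)$ together with $\hat{x}_t = \textbf{prox}_{\alpha_t r}(\alpha_t\bar{\rho} x_t - \alpha_t\nabla f_\mu(\hat{x}_t) + \zeta_t\hat{x}_t)$, the difference of the two arguments simplifies to $\zeta_t(x_t - \hat{x}_t) - \alpha_t(G_t - \nabla f_\mu(\hat{x}_t))$ with $\zeta_t = 1 - \alpha_t\bar{\rho}$. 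First I would therefore record
$$\|x_{t+1} - \hat{x}_t\|^2 \leq \left\|\zeta_t(x_t-\hat{x}_t) - \alpha_t\bigl(G_t - \nabla f_\mu(\hat{x}_t)\bigr)\right\|^2$$
and expand the square into a $\zeta_t^2\|x_t-\hat{x}_t\|^2$ term, a cross term, and an $\alpha_t^2\|G_t - \nabla f_\mu(\hat{x}_t)\|^2$ term.

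Next I would take the conditional expectation $\mathbb{E}_{[t]}$, noting that $x_t$ and hence $\hat{x}_t = \textbf{prox}_{\bar{\rho}^{-1}\phi_\mu}(x_t)$ are measurable with respect to the conditioning filtration, so that $\nabla f_\mu(x_t)$ and $\nabla f_\mu(\hat{x}_t)$ are deterministic under $\mathbb{E}_{[t]}$. Setting the direction $r = x_t - \hat{x}_t$ and invoking Lemma~\ref{lemma: inner product with iterates and estimator bound}, the cross term $-2\zeta_t\alpha_t\mathbb{E}_{[t]}\{(x_t-\hat{x}_t)^\top(G_t - \nabla f_\mu(\hat{x}_t))\}$ reduces, under \textbf{(B1)}, to $-2\zeta_t\alpha_t(x_t-\hat{x}_t)^\top(\nabla f_\mu(x_t) - \nabla f_\mu(\hat{x}_t))$; under \textbf{(B2)} the same quantity is produced up to the additive slack $2\zeta_t\alpha_t\frac{n}{\mu}\tilde{\delta} D$ (here I would check $x_t,\hat{x}_t\in\mathcal{X}$, which holds because the indicator $\iota_\mathcal{X}$ is a summand of $r$, so that the \textbf{(B2)} branch of Lemma~\ref{lemma: inner product with iterates and estimator bound} applies with $r=x_t-\hat{x}_t$). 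The $\rho$-weak convexity of $f_\mu$ (equivalently, monotonicity of $\nabla f_\mu + \rho I$) then gives $(x_t-\hat{x}_t)^\top(\nabla f_\mu(x_t)-\nabla f_\mu(\hat{x}_t)) \geq -\rho\|x_t-\hat{x}_t\|^2$, and since $\zeta_t \geq 0$ for $\alpha_t\in(0,\bar{\rho}^{-1}]$ this bounds the cross term above by $2\zeta_t\alpha_t\rho\|x_t-\hat{x}_t\|^2$.

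For the remaining second-moment term I would use $\|a-b\|^2 \leq 2\|a\|^2 + 2\|b\|^2$ to split $\mathbb{E}_{[t]}\{\|G_t-\nabla f_\mu(\hat{x}_t)\|^2\} \leq 2\mathbb{E}_{[t]}\{\|G_t\|^2\} + 2\|\nabla f_\mu(\hat{x}_t)\|^2$, then apply the variance bound of Lemma~\ref{lemma: variance of stochastic gradient estimator} together with $\|\nabla f_\mu(\hat{x}_t)\| \leq G$ (Lipschitzness of $f_\mu$ from Lemma~\ref{lemma: integral smoothing via mollifiers}) and the trivial estimate $G^2 \leq 32\sqrt{2\pi}nG^2 + \frac{n^2}{\mu^2}\tilde{\delta}$ to collapse both pieces into $4\alpha_t^2(32\sqrt{2\pi}nG^2 + \frac{n^2}{\mu^2}\tilde{\delta})$. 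Finally I would collect the coefficient of $\|x_t-\hat{x}_t\|^2$, namely $\zeta_t^2 + 2\zeta_t\alpha_t\rho = 1 - 2\alpha_t(\bar{\rho}-\rho) + \alpha_t^2\bar{\rho}(\bar{\rho} - 2\rho)$, and discard the last summand using $\bar{\rho} \leq 2\rho$ so that it is nonpositive, yielding the claimed factor $1 - 2\alpha_t(\bar{\rho}-\rho)$; the \textbf{(B2)} case merely carries the extra $2\zeta_t\alpha_t\frac{n}{\mu}\tilde{\delta} D$ through unchanged. The main obstacle is bookkeeping rather than conceptual: keeping the conditioning clean so that Lemma~\ref{lemma: inner product with iterates and estimator bound} can be applied with the \emph{random-but-measurable} direction $r=x_t-\hat{x}_t$, and ensuring that under \textbf{(B2)} both $x_t$ and $\hat{x}_t$ lie in $\mathcal{X}$; the algebraic simplification of the coefficient, resting crucially on the choice $\bar{\rho}\in(\rho,2\rho]$, is the one place where a sign must be tracked carefully.
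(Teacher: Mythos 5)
Your proof is correct and takes essentially the same route as the paper's: the fixed-point representation of $\hat{x}_t$ from Lemma~\ref{lemma: properties of f-mu}, nonexpansiveness of $\textbf{prox}_{\alpha_t r}$, the cross-term treatment via Lemma~\ref{lemma: inner product with iterates and estimator bound}, the monotonicity bound from $\rho$-weak convexity of $f_{\mu}$, the second-moment bound from Lemma~\ref{lemma: variance of stochastic gradient estimator}, and discarding the nonpositive $\alpha_t^2\bar{\rho}(\bar{\rho}-2\rho)$ term using $\bar{\rho}\leq 2\rho$. In fact, you make explicit two steps the paper leaves implicit, namely the derivation of the factor $4$ in the bound on $\mathbb{E}_{[t]}\{\|G_t-\nabla f_{\mu}(\hat{x}_t)\|^2\}$ and the verification that $x_t,\hat{x}_t\in\mathcal{X}$ so that the \textbf{(B2)} branch of Lemma~\ref{lemma: inner product with iterates and estimator bound} applies.
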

\begin{proof}
\par By definition, we have that $\hat{x}_t = \textbf{prox}_{\bar{\rho}^{-1}\phi_{\mu}}(x_t)$. Thus,
\begin{equation*}
\begin{split}
&\mathbb{E}_{[t]}\left\{\|x_{t+1} - \hat{x}_t\|^2\right\}  = \mathbb{E}_{[t]}\left\{\left\|\textbf{prox}_{\alpha_t r}(x_t - \alpha_t G_t) - \textbf{prox}_{\alpha_t r}\left(\alpha_t \bar{\rho}-\alpha_t \nabla f_{\mu}(x_t) + \zeta_t \hat{x}_t\right)\right\|^2 \right\}\\
& \quad \leq \mathbb{E}_{[t]}\left\{\left\|(x_t - \alpha_t G_t) -\left(\alpha_t \bar{\rho}-\alpha_t \nabla f_{\mu}(x_t) + \zeta_t \hat{x}_t\right)\right\|^2 \right\}\\
&\quad  = \zeta_t^2 \|x_t - \hat{x}_t\|^2 - 2\zeta_t \alpha_t \mathbb{E}_{[t]}\left\{(x_t - \hat{x}_t)^\top \left(G_t - \nabla f_{\mu}(\hat{x}_t) \right)\right\}+ \alpha_t^2 \mathbb{E}_{[t]}\left\{\|G_t - \nabla f_{\mu}(\hat{x}_t)\|^2 \right\},
\end{split}
\end{equation*}
\noindent where the first equality follows from Algorithm \ref{Algorithm: Z-iProxSG} and from Lemma \ref{lemma: properties of f-mu}, and the inequality follows from the non-expansiveness of the proximity operator of $r$. Next, we separate two cases. 
\noindent \paragraph{\textbf{Case 1:}} Under condition \textbf{(B1)} of Assumption \ref{assum: second assumption}, by utilizing Lemmata \ref{lemma: variance of stochastic gradient estimator} and \ref{lemma: inner product with iterates and estimator bound}, we have
\begin{equation*}
\begin{split}
&\mathbb{E}_{[t]}\left\{\|x_{t+1} - \hat{x}_t\|^2\right\} \\
&\quad \leq \zeta_t^2 \|x_t - \hat{x}_t\|^2 - 2\zeta_t \alpha_t (x_t - \hat{x}_t)^\top \left(\nabla f_{\mu}(x_t) - \nabla f_{\mu}(\hat{x}_t) \right)+ 8\alpha_t^2 \left(16 \sqrt{2\pi}nG^2 + \frac{n^2 \tilde{\delta}^2}{\mu^2}\right)\\
&\quad \leq \zeta_t^2 \|x_t - \hat{x}_t\|^2 +2\zeta_t \alpha_t\rho \|x_t - \hat{x}_t\|^2+ 8\alpha_t^2 \left(16 \sqrt{2\pi}nG^2 +\frac{n^2 \tilde{\delta}^2}{\mu^2}\right) \\
&\quad = \left(1-\left( 2\alpha_t(\bar{\rho}-\rho)+\alpha_t^2\bar{\rho}(2\rho-\bar{\rho})\right) \right)\|x_t- \hat{x}_t\|^2 + 8\alpha_t^2 \left(16 \sqrt{2\pi}nG^2 + \frac{n^2 \tilde{\delta}^2}{\mu^2}\right),
\end{split}
\end{equation*}
\noindent where, in the last inequality, we used the fact that $f_{\mu}$ is $\rho-$weakly convex, with $\rho = cG\sqrt{n}\mu^{-1}$ (cf. Lemma \ref{lemma: properties of f-mu}), which implies (e.g., from \cite[Proposition 4.10]{Vial_WeakConvexity}) that
\[ (x_1 - x_2)^\top (\nabla f_{\mu}(x_1) - \nabla f_{\mu}(x_2)) \geq -\rho\|x_1-x_2\|^2,\qquad \text{for all }x_1, x_2 \in \mathbb{R}^n.\]
\noindent The first inequality then follows by noting that $\bar{\rho} \leq 2\rho$.
\noindent \paragraph{\textbf{Case 2:}} Similarly, under condition \textbf{(B2)} of Assumption \ref{assum: second assumption}, by utilizing once again Lemmata \ref{lemma: variance of stochastic gradient estimator} and \ref{lemma: inner product with iterates and estimator bound}, we have
\begin{equation*}
\begin{split}
&\mathbb{E}_{[t]}\left\{\|x_{t+1} - \hat{x}_t\|^2\right\} \\
&\  \leq \zeta_t^2 \|x_t - \hat{x}_t\|^2 - 2\zeta_t \alpha_t (x_t - \hat{x}_t)^\top \left(\nabla f_{\mu}(x_t) - \nabla f_{\mu}(\hat{x}_t) \right) + 2\zeta_t \alpha_t\frac{n}{\mu}\tilde{\delta}D \\ &\ \qquad + 8\alpha_t^2 \left(16 \sqrt{2\pi}nG^2 + \frac{n^2 \tilde{\delta}^2}{\mu^2}\right)\\
&\  \leq \zeta_t^2 \|x_t - \hat{x}_t\|^2 +2\zeta_t \alpha_t\rho \|x_t - \hat{x}_t\|^2 + 2\zeta_t \alpha_t\frac{n}{\mu}\tilde{\delta}D +  8\alpha_t^2 \left(16 \sqrt{2\pi}nG^2 + \frac{n^2 \tilde{\delta}^2}{\mu^2}\right) \\
&\  = \left(1-\left( 2\alpha_t(\bar{\rho}-\rho)+\alpha_t^2\bar{\rho}(2\rho-\bar{\rho})\right) \right)\|x_t- \hat{x}_t\|^2 + 2\zeta_t \alpha_t\frac{n}{\mu}\tilde{\delta}D + 8\alpha_t^2 \left(16 \sqrt{2\pi}nG^2 + \frac{n^2 \tilde{\delta}^2}{\mu^2}\right),
\end{split}
\end{equation*}
\noindent where $D$ is the diameter of $\mathcal{X}$ (cf. condition \textbf{(B2)} of Assumption \ref{assum: second assumption}) and $\tilde{\delta}$ is the bound on the oracle noise (cf. Definition \ref{def: inexact noisy oracle}).
\end{proof}
\par We are now ready to derive the non-asymptotic (ergodic) convergence rate of Algorithm \ref{Algorithm: Z-iProxSG} in terms of the magnitude of the gradient of the Moreau envelope of $\phi_{\mu}$. We will provide two different rates, based on either condition \textbf{(B1)} or \textbf{(B2)} of Assumption \ref{assum: second assumption}.
\begin{theorem} \label{thm:B1 convergence rate}
Fix $\mu > 0$ and consider problem \eqref{eqn: main problem} by letting Assumption \textnormal{\ref{assum: basic assumption}} hold. Let also $\{x_t\}_{t=0}^T$ be the sequence of iterates of Algorithm \textnormal{\ref{Algorithm: Z-iProxSG}}, with $x_{t^*}$ being the point returned by the method. If condition \textnormal{\textbf{(B1)}} of Assumption \textnormal{\ref{assum: second assumption}} holds, then
by letting $\Phi \geq e_{(2\rho)^{-1}}\phi_{\mu}(x_0) - \min_{x \in \mathbb{R}^n} \phi_{\mu}(x)$ (with $\Phi > 0$) and choosing 
\[ \alpha_t =\sqrt{\frac{\Phi}{8cGn^{3/2}\mu^{-1}\left(16 \sqrt{2\pi}G^2 + \frac{n}{\mu^2}\tilde{\delta}^2\right)(T+1)}},\]
\noindent we obtain that
\[ \mathbb{E}\left\{\left\| \nabla e_{(2\rho)^{-1}}\phi_{\mu}(x_{t^*})\right\|^2 \right\}\leq 4 \sqrt{\frac{8c G\Phi n^{3/2}\mu^{-1}\left(16 \sqrt{2\pi} G^2 + \frac{n}{\mu^2}\tilde{\delta}^2\right)}{T+1}},\]
\noindent where $c$ is a constant independent of $n$ (cf. Lemma \textnormal{\ref{lemma: integral smoothing via mollifiers}}), $G$ is the Lipschitz continuity constant of $f$ (cf. Assumption \textnormal{\ref{assum: basic assumption}}), and $\tilde{\delta}$ is the bound on the oracle noise (cf. Definition \textnormal{\ref{def: inexact noisy oracle}}).\\

\par Alternatively, if condition \textnormal{\textbf{(B2)}} of Assumption \textnormal{\ref{assum: second assumption}} holds instead, then, for the same choice of step-size, we obtain
\[ \mathbb{E}\left\{\left\| \nabla e_{(2\rho)^{-1}}\phi_{\mu}(x_{t^*})\right\|^2 \right\}\leq 4 \sqrt{\frac{8c G\Phi n^{3/2}\mu^{-1}\left(16 \sqrt{2\pi} G^2 + \frac{n}{\mu^2}\tilde{\delta}^2\right)}{T+1}} + 4\tilde{\delta} D\frac{n}{\mu},\]
\noindent where $D$ is the diameter of $\mathcal{X}$ (cf. Assumption \textnormal{\ref{assum: second assumption}}).
\end{theorem}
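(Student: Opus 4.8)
The plan is to use the Moreau envelope $e_{(2\rho)^{-1}}\phi_\mu$ as a Lyapunov potential and run a weakly-convex proximal-stochastic-gradient descent argument driven by the one-step estimate of Lemma~\ref{lemma: descent of the algorithm}. First I would fix $\bar\rho = 2\rho$, so that the prox point $\hat x_t = \textbf{prox}_{\bar\rho^{-1}\phi_\mu}(x_t)$ of Lemma~\ref{lemma: properties of f-mu} is taken at precisely the envelope parameter $\lambda = (2\rho)^{-1}$. With this choice the identity $\nabla e_\lambda \phi_\mu(x_t) = \lambda^{-1}(x_t - \hat x_t) = 2\rho (x_t - \hat x_t)$ yields $\|\nabla e_{(2\rho)^{-1}}\phi_\mu(x_t)\|^2 = 4\rho^2 \|x_t - \hat x_t\|^2$, which is exactly the quantity to be bounded, and it simultaneously turns the contraction factor $1 - 2\alpha_t(\bar\rho-\rho)$ of Lemma~\ref{lemma: descent of the algorithm} into the clean $1 - 2\alpha_t\rho$.

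Next I would produce a descent inequality for the envelope itself. Evaluating the infimum defining $e_\lambda\phi_\mu(x_{t+1})$ at the feasible point $w = \hat x_t$, and using that $\hat x_t$ is the exact minimizer defining $e_\lambda\phi_\mu(x_t)$, gives the two-sided estimate $e_\lambda\phi_\mu(x_{t+1}) - e_\lambda\phi_\mu(x_t) \le \tfrac{1}{2\lambda}\big(\|x_{t+1}-\hat x_t\|^2 - \|x_t-\hat x_t\|^2\big)$. Taking $\mathbb{E}_{[t]}\{\cdot\}$, substituting Lemma~\ref{lemma: descent of the algorithm}, and using $\tfrac{1}{2\lambda} = \rho$ together with the gradient identity above, I obtain
\[ \mathbb{E}_{[t]}\{e_\lambda\phi_\mu(x_{t+1})\} \le e_\lambda\phi_\mu(x_t) - \tfrac{\alpha_t}{2}\|\nabla e_\lambda\phi_\mu(x_t)\|^2 + 4\rho\alpha_t^2 V, \]
where $V \triangleq 32\sqrt{2\pi}nG^2 + \tfrac{n^2}{\mu^2}\tilde\delta$; under condition \textbf{(B2)} the identical computation carries an additional inexactness term proportional to $\zeta_t\alpha_t\tfrac{n}{\mu}\tilde\delta D$ coming from Lemma~\ref{lemma: inner product with iterates and estimator bound}, which I would control using $\zeta_t \le 1$.

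Finally I would rearrange to isolate $\tfrac{\alpha_t}{2}\|\nabla e_\lambda\phi_\mu(x_t)\|^2$, take total expectations, and telescope over $t = 0,\dots,T$; here the envelope is essential because it telescopes even though the prox point $\hat x_t$ moves with $t$, which $\|x_t - \hat x_t\|^2$ alone would not. Using $e_\lambda\phi_\mu(x_{T+1}) \ge \min_x \phi_\mu(x)$ and the definition of $\Phi$ collapses the boundary terms, and dividing by $\tfrac12\sum_t \alpha_t$ while invoking the sampling rule $\mathbb{P}(t^*=t) = \alpha_t/\sum_i\alpha_i$ makes the left-hand side exactly $\mathbb{E}\{\|\nabla e_\lambda\phi_\mu(x_{t^*})\|^2\}$. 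For the constant step-size this produces the bound $\tfrac{2\Phi}{(T+1)\alpha} + 8\rho V\alpha$ (plus the additive bias of order $\tfrac{n}{\mu}\tilde\delta D$ under \textbf{(B2)}), and an AM--GM minimization of $a/\alpha + b\alpha$ simultaneously yields the stated step-size $\alpha = \sqrt{\Phi/(4\rho V (T+1))}$ and the optimal value $8\sqrt{\Phi\rho V/(T+1)}$; substituting $\rho = cG\sqrt{n}\mu^{-1}$ and $V = n(32\sqrt{2\pi}G^2 + \tfrac{n}{\mu^2}\tilde\delta)$ recovers the claimed rate. I expect the only delicate point to be this telescoping step---justifying that the Moreau envelope is the correct potential so that the moving prox point does not obstruct summation, and verifying in the \textbf{(B2)} case that the residual cross-term accumulates only to a constant additive bias rather than degrading the $O(1/\sqrt{T})$ decay.
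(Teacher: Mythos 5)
Your proposal is correct and follows essentially the same route as the paper's own proof: both use the Moreau envelope $e_{(2\rho)^{-1}}\phi_{\mu}$ as the potential, bound $e_{\bar\rho^{-1}}\phi_{\mu}(x_{t+1})$ by evaluating the defining infimum at $\hat{x}_t$, substitute Lemma \ref{lemma: descent of the algorithm}, invoke the identity $\nabla e_{\bar\rho^{-1}}\phi_{\mu}(x_t) = \bar\rho(x_t - \hat{x}_t)$ with $\bar\rho = 2\rho$, telescope, pass to $x_{t^*}$ via the sampling rule, and optimize the constant step size (the paper parametrizes $\alpha_t = \gamma/\sqrt{T+1}$ and minimizes over $\gamma$, which is precisely your AM--GM step, yielding the same $\alpha$ and the same rate). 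Your treatment of condition \textbf{(B2)}---carrying the extra $\zeta_t \alpha_t \frac{n}{\mu}\tilde\delta D$ term through with $\zeta_t \leq 1$ so that it accumulates only to a constant additive bias---is also exactly what the paper does.
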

\begin{proof}
\par From Lemma \ref{lemma: descent of the algorithm}, we have that
\begin{equation*}
\begin{split}
&\mathbb{E}_{[t]}\left\{e_{\bar{\rho}^{-1}}\phi_{\mu}(x_{t+1})\right\} \leq \mathbb{E}_{[t]}\left\{\phi_{\mu}(\hat{x}_{t}) + \frac{\bar{\rho}}{2}\|\bar{x}_t-x_{t+1}\|^2\right\}\\
&\quad \leq \phi_{\mu}(\hat{x}_{t}) +\frac{\bar{\rho}}{2}\left( \|x_t- \hat{x}_t\|^2 - 2\alpha_t(\bar{\rho}-\rho)\|x_t- \hat{x}_t\|^2 + 8\alpha_t^2 \left(16 \sqrt{2\pi}nG^2 + \frac{n^2 \tilde{\delta}^2}{\mu^2}\right)\right)\\
&\quad = e_{\bar{\rho}^{-1}}\phi_{\mu}(x_t) + \bar{\rho}\left( - \alpha_t(\bar{\rho}-\rho)\|x_t- \hat{x}_t\|^2 + 4\alpha_t^2 \left(16 \sqrt{2\pi}nG^2 + \frac{n^2 \tilde{\delta}^2}{\mu^2}\right)\right),
\end{split}
\end{equation*}
\noindent where the first inequality follows from the definition of the Moreau envelope of $\phi_{\mu}$, and the equality follows from the definition of $\hat{x}_t$ (cf. Lemma \ref{lemma: properties of f-mu}). By the definition of $\hat{x}_t$, and since $\phi_{\mu}$ is $\rho-$weakly convex and $\bar{\rho} > \rho$, we may use \cite[Theorem 3.4]{ATTOUCH1993} to obtain that
\[\nabla e_{\bar{\rho}^{-1}}\phi_{\mu}(x_t) = \frac{1}{\bar{\rho}}(x_t - \hat{x}_t).\]
\noindent Using the previous fact, we next take expectation with respect to the filtration $\xi_0, W_0,\ldots,\xi_{t-1},W_{t-1}$ and use the law of total expectation to obtain
\begin{equation*}
\begin{split}
\mathbb{E}\left\{e_{\bar{\rho}^{-1}}\phi_{\mu}(x_{t+1})\right\} &\leq \mathbb{E}\left\{e_{\bar{\rho}^{-1}}\phi_{\mu}(x_t) \right\} - \frac{\alpha_t(\bar{\rho}-\rho)}{\bar{\rho}}\left\|\nabla e_{\bar{\rho}^{-1}}\phi_{\mu}(x_t)\right\|^2\\
&\qquad + 4\alpha_t^2 \bar{\rho}\left(16 \sqrt{2\pi}nG^2 + \frac{n^2 \tilde{\delta}^2}{\mu^2}\right).
\end{split}
\end{equation*}
\noindent By unrolling the above recursion, we have:
\begin{equation*}
\begin{split}
\mathbb{E}\left\{e_{\bar{\rho}^{-1}}\phi_{\mu}(x_{t+1})\right\}& \leq e_{\bar{\rho}^{-1}}\phi_{\mu}(x_0) - \frac{\bar{\rho}-\rho}{\bar{\rho}}\sum_{t=0}^T\alpha_t \left\|\nabla e_{\bar{\rho}^{-1}}\phi_{\mu}(x_t)\right\|\\ &\qquad + 4\bar{\rho}\left(16 \sqrt{2\pi}nG^2 + \frac{n^2 \tilde{\delta}^2}{\mu^2}\right) \sum_{t=0}^T \alpha_t^2.
\end{split}
\end{equation*}
\noindent We can lower bound the left-hand side of the above inequality by $\phi_{\mu}^* \triangleq \min_{x \in \mathbb{R}^n} \phi_{\mu}(x)$, and re-arrange to get
\[\frac{1}{\sum_{t=0}^T\alpha_t}\sum_{t=0}^T\alpha_t \left\|\nabla e_{\bar{\rho}^{-1}}\phi_{\mu}(x_t)\right\| \leq \frac{\bar{\rho}}{\bar{\rho}-\rho}\frac{e_{\bar{\rho}^{-1}}\phi_{\mu}(x_0)-\phi_{\mu}^* + 4\bar{\rho}\left(16 \sqrt{2\pi}nG^2 + \frac{n^2 \tilde{\delta}^2}{\mu^2}\right) \sum_{t=0}^T \alpha_t^2}{\sum_{t=0}^T\alpha_t}.\]
\noindent We observe that the left-hand side of the last inequality is nothing else than $\mathbb{E}\left\{\|\nabla e_{\bar{\rho}^{-1}}\phi_{\mu}(x_{t^*})\|^2 \right\}$, where $x_{t^*}$ is the iterate that Algorithm \ref{Algorithm: Z-iProxSG} returns.
\par To complete the proof, we set $\bar{\rho} = 2\rho$, we let $\Phi \geq e_{(2\rho)^{-1}}\phi_{\mu}(x_0)-\phi_{\mu}^*$ such that $\Phi > 0$, and set $\alpha_t = \gamma/\sqrt{T+1}$, for some $\gamma > 0$ to obtain
\[ \mathbb{E}\left\{\left\|\nabla e_{\bar{\rho}^{-1}}\phi_{\mu}(x_{t^*})\right\|^2 \right\} \leq 2\frac{\Phi+8\rho\left(16 \sqrt{2\pi}nG^2 + \frac{n^2 \tilde{\delta}^2}{\mu^2}\right)}{\gamma \sqrt{T+1}}.\]
\noindent Then, we minimize over $\gamma$, which yields that
\[ \gamma = \sqrt{\frac{\Phi}{8\rho\left(16 \sqrt{2\pi}nG^2 + \frac{n^2 \tilde{\delta}^2}{\mu^2}\right)}},\]
\noindent and completes the first part of the proof, upon noting that $\rho = cG \sqrt{n}\mu^{-1}$.
\par For the second part of the proof, we assume (without loss of generality) that $\rho^{-1} > \alpha_t$ (where $\alpha_t$ is given in the statement of the theorem). Then, $0< \zeta_t \leq 1$ (where $\zeta_t$ is defined in Lemma \ref{lemma: properties of f-mu}) and the ``descent" recursion carries an additional term of the form $4\frac{n}{\mu}\tilde{\delta}D \alpha_t$ (cf. Lemma \ref{lemma: descent of the algorithm}). The result then follows immediately by performing the same analysis as before.
\end{proof}
\begin{remark}
Let us now briefly discuss the result of Theorem \textnormal{\ref{thm:B1 convergence rate}}. To that end, we need to separate two cases, i.e., depending on whether condition \textnormal{\textbf{(B1)}} or \textnormal{\textbf{(B2)}} of Assumption \textnormal{\ref{assum: second assumption}} holds. In the former case, which is really general and highly relevant to the applications considered herein, it suffices to enforce that $\tilde{\delta} = \mathcal{O}(\mu/\sqrt{n})$ to retrieve the same convergence rate as that obtained in \textnormal{\cite[Theorem 3.2]{NEURIPS2022_Linetal}}, in the context of \emph{unconstrained} nonsmooth and nonconvex stochastic optimization. On the other hand, under condition \textnormal{\textbf{(B2)}} of Assumption \textnormal{\ref{assum: second assumption}}, we instead need to enforce that $\tilde{\delta} = \mathcal{O}(\epsilon^2 \mu/n)$ in order to retrieve the convergence rate of \textnormal{\cite[Theorem 3.2]{NEURIPS2022_Linetal}}.
\par Another important point, already briefly mentioned in Section \textnormal{\ref{subsec: inexact noisy oracle}} (cf. Remark \ref{remark: discussion on oracle definition}), is that the analysis could be extended to accommodate for stochastic upper bounds on the oracle error (i.e., allowing $\tilde{\delta}$ to be a function of the underlying randomness, and force it to have finite first- and second-moment rather than enforcing it to be uniformly bounded). This could be done using a similar methodology as that presented in \textnormal{\cite{Pougk_etal_IRSinexact}}, and would reveal an averaged-over-the-iterates error propagation. This is omitted here for brevity of exposition.
\end{remark}

\section{Selected Applications} \label{sec: applications}
\par In this section, we showcase the applicability of the proposed algorithm in two wide classes of problems, namely, two-stage stochastic programming, and stochastic minimax optimization, while also briefly mentioning certain additional applications that may be of interest to the wider academic community.
\subsection{Two-stage stochastic programming} \label{subsec: 2SP}
\par On our usual probability space $(\Omega,\mathscr{F},P)$, we consider a random vector $\xi \colon \Omega \rightarrow \Xi \subset \mathbb{R}^d$, and its induced Borel space $(\Xi,\mathscr{B}(\Xi),P)$. In this section, we consider nonconvex two-stage stochastic programming problems of the form
\begin{equation} \label{eqn: 2SP} \tag{2SP} \min_{x \in \mathcal{X}} \mathbb{E}_{\xi}\left\{\min_{y \in \mathcal{Y}(\xi)} \widehat{F}(x,y,\xi) \right\},
\end{equation}
\noindent where $\widehat{F} \colon \mathbb{R}^n \times \mathbb{R}^m \times \Xi \rightarrow \mathbb{R}$ is Borel in $\xi \in \Xi$, and continuous on $\mathbb{R}^n\times \mathbb{R}^m$ for a.e. $\xi \in \Xi$. We let $\mathcal{X} \subset \mathbb{R}^n$ be a convex and compact set, and the mulifunction $\mathcal{Y} \colon \Xi \rightrightarrows \mathbb{R}^m$ be compact-valued and measurable with respect to $\mathscr{B}(\Xi)$ (and thus, the indicator function $\iota_{\mathcal{Y}(\xi)}(\cdot)$ is random lower semicontinuous; cf. \cite[Definition 9.47]{ShapiroLecturesStochProg3rd}).

\begin{remark}
Let us note that problem \text{\eqref{eqn: 2SP}} is not stated in its full generality. Specifically, one could consider a constraint set $\mathcal{Y}(\xi)$, for the second-stage problem, which also depends $x$. Then, in order to show that Assumption \textnormal{\ref{assum: basic assumption}} holds for \text{\eqref{eqn: 2SP}}, we would have to use the perturbation analysis machinery from, e.g., \textnormal{\cite{ShapiroPerturbationAnalysis}}. This is omitted here in the interest of clarity and brevity. Nonetheless, even in its current simplified form, problem \textnormal{\eqref{eqn: 2SP}} is already very general (with a plethora of important applications, as stated in the introduction) and its solution under minimal conditions remains a challenge.
\end{remark}

\paragraph{\textbf{Regularity conditions and assumptions}}
\par In keeping with the notation of \eqref{eqn: main problem}, we let $F(x,
\xi) \triangleq \min_{y \in \mathcal{Y}(\xi)} \widehat{F}(x,y,\xi)$. In order to ensure that problem \eqref{eqn: 2SP} is well-defined, we will implicitly make the minimal assumption that $\widehat{F}(x,y^*(x,\xi(\cdot)),\xi(\cdot)) \in \mathcal{L}_1(\Omega,\mathscr{F},P;\mathbb{R})$ for any measurable selection $y^*(x,\xi(\cdot)) \in \arg\min_{y \in \mathcal{Y}(\xi(\cdot))} \widehat{F}(x,y,\xi(\cdot))$. Throughout this section, we will employ the following blanket assumption on \eqref{eqn: 2SP}.
\begin{assumption}\label{assum: 2SP basic assumption} 
\noindent The following conditions are in effect for \textnormal{\eqref{eqn: 2SP}}:
\begin{itemize}
\item[\textbf{(C1)}] For a.e. $\xi \in \Xi$, the function $\widehat{F}(\cdot,y,\xi) \colon \mathbb{R}^n \rightarrow \mathbb{R}$ is differentiable for every $y \in \mathcal{Y}$ and $\nabla_x \hat{F}(\cdot,\cdot,\xi)$ is continuous on $\mathbb{R}^n \times \mathcal{Y}$, for a.e. $\xi \in \Xi$. Moreover, for all $(x,y) \in \mathbb{R}^n \times \mathbb{R}^m$, the function $\widehat{F}(x,y,\cdot)$ is Borel measurable;
\item[\textbf{(C2)}] The set $\mathcal{X}\subset \mathbb{R}^n$ is convex and compact and the multifunction $\mathcal{Y}\colon \Xi \rightrightarrows \mathbb{R}^m$ is compact-valued and Borel measurable;
%\item[\textbf{(C3)}] There exist constants $C_L > 0$ and $\eta_L>0$, such that the function 
%\[\bar{v}(y;x,\xi) \triangleq  F(x,y,\xi)-\min_{y \in \mathcal{Y}(x,\xi)} F(x,y,\xi),\]
%\noindent satisfies the \L{}ojasiewicz inequality for a.e. $\xi \in \Xi$ and every $x \in \mathcal{X}$, i.e.,
%\[\text{dist}\left(y,\underset{y \in \mathcal{Y}(x,\xi)}{\arg\min} F(x,y,\xi)\right)\leq C_L \left(F(x,y,\xi)-\min_{y \in \mathcal{Y}(x,\xi)} F(x,y,\xi)\right)^{\eta_L};\]
\item[\textbf{(C3)}] We can draw i.i.d. samples from the law of $\xi$;
\item[\textbf{(C4)}] For a.e. $\xi \in \Xi$, the function $F(x,
\xi)$ satisfies condition \textnormal{\textbf{(A1)}} of Assumption \textnormal{\ref{assum: basic assumption}}.
\end{itemize}
\end{assumption}
\par Let us now briefly consider the conditions imposed in Assumption \ref{assum: 2SP basic assumption}. Specifically, the only condition that requires verification is \textbf{(C4)}. We proceed to argue that this condition is indeed minimal. Specifically, for a.e. $\xi \in \Xi$, and by using conditions \textbf{(C1)}--\textbf{(C2)} of Assumption \ref{assum: 2SP basic assumption}, we may employ Danksin's theorem (e.g., see \cite[Theorem 9.26]{ShapiroLecturesStochProg3rd}), which implies that $F(\cdot,\xi)$ is $L(\xi)-$Lipschitz continuous on $\mathcal{X}$ (since $\mathcal{X}$ is assumed to be compact). In other words, condition \textbf{(C4)} merely enforces that $\mathbb{E}\{L^2(\xi)\} \leq G^2$, for some $G > 0$.\\
\paragraph{\textbf{Applying Algorithm \ref{Algorithm: Z-iProxSG} to \eqref{eqn: 2SP}}} 
\par Next, we discuss the compatibility of \eqref{eqn: 2SP} (alongside Assumption \ref{assum: 2SP basic assumption}) with the developments in Section \ref{sec: inexact zeroth-order method}, and in particular with the oracle definition (cf. Definition \ref{def: inexact noisy oracle} and its associated Assumption \ref{assum: second assumption}). Let us begin by noting that condition \textbf{(B2)} of Assumption \ref{assum: second assumption} is already satisfied since we have assumed that $\mathcal{X}$ is a convex and compact set. We next discuss the plausibility of condition \textbf{(B1)} instead, while discussing the compatibility of the inexact noisy oracle given in Definition \ref{def: inexact noisy oracle} with \eqref{eqn: 2SP}.
\par We start by noting that Assumption \ref{assum: 2SP basic assumption} does not enforce convexity of the second-stage problem, i.e., of $\min_{y \in \mathcal{Y}(\xi)} F(x,y,\xi)$, given some $(x,\xi) \in \mathbb{R}^n \times \Xi$. Nonetheless, the oracle definition implicitly assumes that we can consistently employ some algorithm (e.g., a numerical method) that is able to find, for any $x \in \mathcal{X}$ and a.e. $\xi \in \Xi$, a solution $\tilde{y}(x,\xi) \in \mathcal{Y}(\xi)$ such that
\[ F(x,\tilde{y}(x,\xi),\xi) - \min_{y \in \mathcal{Y}(\xi)} F(x,y,\xi) = \delta(x,\xi) \leq \tilde{\delta},\]
\noindent where, in this case, the absolute value is obsolete (by the definition of problem \eqref{eqn: 2SP}). In what follows, we argue that the proposed algorithm provides a general-purpose solution method for two-stage stochastic programming problems that go far beyond what has already been considered in the literature. To that end, we discuss Assumption \ref{assum: 2SP basic assumption} by separating cases, first considering lower-level convexity, and the discussing the general nonconvex lower-level case.
\begin{enumerate}
\item The first case that is naturally covered in our setup is the case where $\mathcal{Y}(\xi)$ is a convex set, and for any $x \in \mathcal{X}$ and a.e. $\xi \in \Xi$, $\hat{F}(x,\cdot,\xi)$ is a convex function. Let us note that this does not imply that $\eqref{eqn: 2SP}$ is a convex problem, since we have not enforced convexity on $\hat{F}(\cdot,y,\xi)$. Additionally, and unlike most approaches in the literature, the methodology works without imposing any regularity conditions on the second-stage problem (such as, e.g., Slater's or Robinson's constraint qualifications), neither uniqueness of the second-stage optimal solution for a fixed pair $(x,\xi)$. In the latter case, i.e., under the assumption of uniqueness of the second-stage problem's optimal solution (which, for example, follows under the assumption of strong convexity of $\hat{F}(x,\cdot,\xi)$), one could invoke Danksin's theorem (again, see \cite[Theorem 9.26]{ShapiroLecturesStochProg3rd}) to show that $F(\cdot,\xi)$ is differentiable (in which case, one could attempt to solve \eqref{eqn: 2SP} by utilizing stochastic hypergradient descent; e.g., see the developments in \cite{Hashmi_etal_ICASSP}, which focus on two-stage programming problems arising in wireless communication systems). In the general framework of stochastic bilevel optimization, which subsumes two-stage stochastic programming, hypergradient descent schemes that rely on lower-level strong convexity have been well-studied. We refer the reader to \cite{JMLR:Chen_etal,GrazziPS21}, and the reference therein, for additional details.
\par In this general setting, we can assume that the lower level problem can be solved to any accuracy, thus making our oracle as accurate as needed. Thus, following our discussion in Section \ref{subsec: convergence analysis}, we may readily enforce that $\tilde{\delta} = \mathcal{O}(\mu/\sqrt{n})$ (by making use of an appropriate convex numerical optimization solver), thus retrieving the convergence rate achieved in \cite[Theorem 3.2]{NEURIPS2022_Linetal}. Obviously, if condition \textbf{(B1)} of Assumption \ref{assum: second assumption} is not satisfied, it instead suffices to enforce that $\tilde{\delta} = \mathcal{O}(\epsilon^2 \mu/n)$ (since condition \textbf{(B2)} of Assumption \ref{assum: second assumption} is readily satisfied) to obtain the same rate.
\par Finally, let us observe that condition \textbf{(B1)} of Assumption \ref{assum: second assumption} is very natural in this case. Indeed, as already discussed in Remark \ref{remark: inexact oracle}, we can call a numerical optimization solver for the lower level problem (i.e., for $\min_{y \in \mathcal{Y}(\xi)} \hat{F}(x,y,\xi)$) and enforce that it returns a solution of prescribed accuracy, for any $x \in \mathcal{X}$. Thus, the discussion in Remark \ref{remark: inexact oracle} readily applies in this context.
\item In general, the conditions given in Assumption \ref{assum: 2SP basic assumption} do not exclude the case where $\hat{F}(x,\cdot,\xi)$ is nonconvex. In this case, the oracle given in Definition \ref{def: inexact noisy oracle} is still consistent and general enough. Indeed, we do not specify the magnitude of $\tilde{\delta}$ in this definition (which refers to the upper bound on the oracle error). Thus, the proposed algorithm works as intended also in this case. The difference to the convex lower-level case is that we can no longer control the magnitude of $\tilde{\delta}$ to an arbitrary degree (unless further structure is imposed to the lower-level problem). Thus, we cannot expect to retrieve the same convergence rates as those derived in \cite[Theorem 3.2]{NEURIPS2022_Linetal}, and would instead have to settle for an approximately stationary point, with the approximation accuracy directly dependent on the oracle error bound $\tilde{\delta}$.
\par Concerning condition \textbf{(B1)} of Assumption \ref{assum: second assumption}, the situation is less clear compared with the convex lower-level case. Specifically, the discussion given in Remark \ref{remark: inexact oracle} is no longer necessarily applicable. Instead, what this condition implies is that the lower-level problem, while nonconvex, is ``equally hard", irrespectively of $x \in \mathcal{X}$. In other words, this condition implicitly assumes that the lower-level problem can be solved to a similar accuracy, irrespectively of the outer-level parameter vector $x$. While this is not a strong assumption, it is not readily verifiable; for that reason, Algorithm \ref{Algorithm: Z-iProxSG} was analyzed also under condition \textbf{(B2)} of Assumption \ref{assum: second assumption}, which automatically holds in this case.
\end{enumerate}

\paragraph{\textbf{Comparison with alternative solution methods}} 
\par Let us now compare the proposed methodology with alternative optimization schemes that have been devised in the literature to solve problems of the form of \eqref{eqn: 2SP}. There are currently two classes of methods suitable for the solution of \eqref{eqn: 2SP} in the available literature, namely, stochastic successive convex approximation (SSCA) optimization and stochastic hypergradient descent schemes. 
\par Specifically, there is a long line of works focusing on SSCA-type methods for the solution of nonconvex two-stage stochastic programs studied herein, which were heavily utilized in the context of optimization over wireless communication networks and resource allocation (e.g., see \cite{Hong_SSCO,Scutari_SSCO,Yang_SSCO} and the references therein). Such methods, which are typically classified as ``two-timescale schemes", rely on successive convex surrogates and approximation of the problem statistics during the optimization process, which incurs high computational costs as well as unrealistic assumptions for their theoretical grounding (which does not include non-asymptotic guarantees).
\par Many of the drawbacks of SSCA schemes were later addressed in a line of work focused on stochastic hypergradient descent schemes (see \cite{Hashmi_etal_ICASSP,Hashmi_etal_IEEETran,Pougk_etal_IRSinexact}), which avoid the use of any problem statistics (enabling the online execution of the associated algorithms) while also providing much stronger theoretical guarantees compared with SSCA-type methods. 
\par Specifically, the work in \cite{Hashmi_etal_IEEETran} provided a detailed theoretical analysis of these stochastic hypergradient schemes under fairly general assumptions, under the condition that the second-stage problem is solved exactly. This was later relaxed in \cite{Pougk_etal_IRSinexact}, which allowed for inexact evaluations of the objective function of \eqref{eqn: 2SP}. Nonetheless, both approaches require weak convexity and differentiability of $F(\cdot,\xi)$, which in turn can only be established under some strong assumptions on the second-stage problem (i.e., the minimization with respect to $y$). Most notably, the strong second-order sufficient conditions at each optimal solution of the second-stage problem stand out, since they imply a local solution uniqueness property for the second-stage problem, which is known to fail in many circumstances in nonconvex optimization (e.g., see \cite{ShapiroPerturbationAnalysis}). \par Additionally, while \cite{Pougk_etal_IRSinexact} allows for oracle errors, there is still a requirement that the distance between the retrieved approximate solution to the second-stage problem (returned by the oracle) is ``close" to some optimal solution. In this work, we instead only require that the objective values of these two points are close, which is much more consistent in the context of nonconvex optimization. Indeed, in order to guarantee that this ``closeness" of the oracle point to some optimal solution, required in \cite{Pougk_etal_IRSinexact}, is satisfied, the authors had to restrict the class of functions $\hat{F}(\cdot,\cdot,\xi)$ to those that are real-analytic. While this class is fairly rich, it is significantly more limited compared with the functions included in this work, under Assumption \ref{assum: 2SP basic assumption}. 
\par As expected, assuming that the conditions required by stochastic hypergradient schemes are satisfied for \eqref{eqn: 2SP}, one may obtain better rates compared with those derived herein, and the corresponding algorithms require a single oracle evaluation at each outer stochastic hypergradient iteration. Nonetheless, under an additional oracle evaluation, we showcase that the proposed approach given in Algorithm \ref{Algorithm: Z-iProxSG} can operate under significantly more general assumptions, thus substantially advancing the known capabilities of numerical optimization for nonconvex two-stage stochastic programming.

\subsection{Stochastic minimax optimization} \label{subsec: minimax}
\par Next, we consider general stochastic minimax optimization problems. To that end, we will separate two cases, which typically require distinct solution methods and are naturally applied in different contexts. Specifically, we first consider the case of minimax stochastic optimization problems in which the ``adversary" has complete access to instantaneous information, while in the second case, we will assume that the ``adversary" only has access to ergodic information. A typical application of great importance for the former formulation is that of building deep learning models that are resistant to adversarial attacks (e.g., see \cite{madry2018towards}), while the latter formulation typically appears in applications involving generative adversarial networks, distributionally robust optimization or robust training of neural networks, among others (e.g., see \cite{goodfellow2014generative,Rahimian2019DistributionallyRO,awasthi2021certifying}).
\subsubsection{Adversary with instantaneous information}
\par We first consider stochastic minimax optimization problems of the following form:
\begin{equation} \label{eqn: minimax instantaneous} \tag{MM-I}
\min_{x \in \mathcal{X}} \mathbb{E}_{\xi}\left\{\max_{y \in \mathcal{Y}(\xi)} \hat{F}(x,y,\xi) \right\},
\end{equation}
\noindent where, as in the two-stage programming case, we assume that the feasible set of the adversarial variable $y$ is independent of $x$ but may depend on the random vector $\xi$ (noting that typical applications assume that $\mathcal{Y}$ is also independent of $\xi$; e.g., see \cite{madry2018towards}). 
\par Once again, in keeping with the notation of \eqref{eqn: main problem}, we let $F(x,\xi) \triangleq \max_{y \in \mathcal{Y}(\xi)} \hat{F}(x,y,\xi)$. Furthermore, to ensure that problem \eqref{eqn: minimax instantaneous} is well-defined, we will again implicitly make the minimal assumption that $\widehat{F}(x,y^*(x,\xi(\cdot)),\xi(\cdot)) \in \mathcal{L}_1(\Omega,\mathscr{F},P;\mathbb{R})$ for any measurable selection $y^*(x,\xi(\cdot)) \in \arg\max_{y \in \mathcal{Y}(\xi(\cdot))} \widehat{F}(x,y,\xi(\cdot))$.
\par Let us note the similarity between problem \eqref{eqn: 2SP} and \eqref{eqn: minimax instantaneous}. Indeed, the only difference between the two formulations is the maximization with respect to $y$ in the latter case, compared with the minimization present in the former case. In light of this, our entire discussion given in Section \ref{subsec: 2SP} applies readily also in this case, and thus Algorithm \ref{Algorithm: Z-iProxSG} can be immediately utilized to solve \eqref{eqn: minimax instantaneous}, while being theoretically supported under Assumption \ref{assum: 2SP basic assumption}. At this point, it is important to note that the presence of a maximization (instead of a minimization) term in the objective function of \eqref{eqn: minimax instantaneous} does not change anything structurally important \textit{in the context of Assumption \ref{assum: 2SP basic assumption}}. Indeed, as we did in Section \ref{subsec: 2SP}, we can once again apply Danskin's theorem to show that conditions \textbf{(C1)}--\textbf{(C2)} imply $L(\xi)-$Lipschitz continuity of $F(\cdot,\xi)$, for a.e. $\xi \in \Xi$, which in turn yields that condition \textbf{(C4)} is merely an assumption on the boundedness of second-moment of the associated Lipschitz constant random function $L(\cdot)$. 
\par This example, when paired with the example presented in Section \ref{subsec: 2SP}, immediately showcases the power of the scheme presented in Algorithm \ref{Algorithm: Z-iProxSG}. Indeed, the same algorithmic strategy can be readily employed to solve two distinct optimization problems that are traditionally challenging, under the same minimal conditions collected in Assumption \ref{assum: 2SP basic assumption}. 
\par What is especially interesting in this case is the juxtaposition of the proposed methodology (applied in the context of instantaneous stochastic minimax optimization) and the methodology presented in \cite{madry2018towards}, which lacks any serious theoretical support (let alone under the minimal set of assumptions laid out herein). Nonetheless, as we have also stated in the introduction, an adaptation of the stochastic hypergradient descent scheme proposed in \cite{Pougk_etal_IRSinexact} could potentially be theoretically supported in this case under certain regularity and structural conditions (albeit stronger compared with those laid out in Assumption \ref{assum: 2SP basic assumption}). This is left as a future research direction, open to further consideration.

\subsubsection{Adversary with ergodic information}
\par Next, we consider stochastic (nonconvex-nonconcave) minimax optimization problems of the form:
\begin{equation} \label{eqn: minimax ergodic} \tag{MM-E} 
\min_{x \in \mathcal{X}} \max_{y \in \mathcal{Y}} \mathbb{E}_{\xi}\left\{\hat{F}(x,y,\xi) \right\} \triangleq \hat{f}(x,y),
\end{equation}
\noindent where the feasible set for the adversarial variable, i.e. $\mathcal{Y}$, is now independent of both $x$ and $\xi$, and the adversary is assumed to only have access to ergodic information. Similar to the previous two applications, we let $F(x,\xi) \triangleq \hat{F}(x,y^*(x),\xi)$, where $y^*(x) \in \arg\max_{y \in \mathcal{Y}} \{\mathbb{E}_{\xi}\{\hat{F}(x,y,\xi)\}\}$ is some selection. Let us observe that, for any two selections $y_1^*(x),\ y_2^*(x)$, the objective function value of problem \eqref{eqn: main problem} is the same, i.e. $\mathbb{E}_{\xi}\{F(x,y_1^*(x),\xi)\}  = \mathbb{E}_{\xi}\{F(x,y_2^*(x),\xi)\}$. This detail will be important in order to show that problem \eqref{eqn: minimax ergodic} can be cast in the form of \eqref{eqn: main problem} and satisfy the conditions of Assumption \ref{assum: basic assumption} under mild assumptions.
\par Indeed, let us now briefly discuss Assumption \ref{assum: basic assumption} in the context of the following problem:
\[ \min_{x \in \mathcal{X}} f(x) = \mathbb{E}_{\xi}\{\hat{F}(x,y^*(x),\xi)\},\]
\noindent where $y^*(x) \in \arg\max_{y \in \mathcal{Y}} \{\mathbb{E}_{\xi}\{\hat{F}(x,y,\xi)\}\}$ is an arbitrary selection. We note that under the assumption of compactness of $\mathcal{Y}$, the differentiability of $\hat{f}(\cdot,y)$ for any $y \in \mathcal{Y}$, and the continuity of $\nabla_x \hat{f}(x,y)$ on $\mathbb{R}^n \times \mathcal{Y}$, we would obtain that $f(\cdot)$ is Lipschitz continuous on $\mathcal{X}$. The requirement of Assumption \ref{assum: basic assumption} is slightly stronger, in that it enforces Lipschitz continuity of $F(\cdot,y^*(\cdot),\xi)$ rather than of $f$ (alongside the second-moment condition of the associated Lipschitz continuity constant).  For example, the former could be guaranteed under the following assumptions on $\hat{F}$, without the requirement that $\mathcal{Y}$ is compact:
\begin{itemize}
\item $\hat{F}(x,y,\xi)$ is twice-differentiable with respect to $y$ for all $x \in \mathcal{X}$ and a.e. $\xi \in \Xi$ and the Hessian (w.r.t. $y$) is continuous jointly in $(x,y)$;
\item $\hat{F}(x,y,\xi)$ is Lipschitz continuous with respect to $x$, uniformly in $y$, for a.e. $\xi \in \Xi$.
\end{itemize}
Under these assumptions, we may utilize Robinson's implicit function theorem (e.g., see \cite[Theorem 2B.1]{dontchev2009implicit}) to show that, for all $x \in \mathcal{X}$, there exists a locally Lipschitz continuous selection $y^*(x)$, which in turn implies that $F(x,y^*(x),\xi)$ is locally Lipschitz continuous. Lipschitz continuity is then retrieved by assuming that $\mathcal{X}$ is compact. Note that the fact that the single-valued locally Lipschitz localization $y^*(x)$ cannot necessarily be found in practice is not a problem for the proposed methodology. Indeed, since $f(x)$ has the same value for all selections, and since our algorithm operates under the assumption that $F(x,\xi)$ can only be evaluated inexactly, we may define $F$ using any measurable selection $y^*(\cdot)$; in turn, this can ensure that $F$ satisfies the conditions of Assumption \ref{assum: basic assumption} under mild assumptions.
\par Once again, we see that the proposed algorithmic framework is readily applicable to problems of the form of \eqref{eqn: minimax ergodic}, and its nonasymptotic convergence guarantees hold under less restrictive assumptions compared with alternative approaches provided in the literature (e.g.,  see the developments in \cite{pmlr-v195-daskalakis23b,diakonikolas2021efficient,grimmer2022landscape} and the references therein). We note, however, that the case in which the lower-level (maximization) problem is concave is typically best handled using stochastic gradient descent-ascent schemes (e.g., see the developments in \cite{JMLR:v26:22-0863} and the references therein), assuming that the sample gradients of $\hat{F}$ can be readily computed (which is not a requirement for the method proposed herein). 
\par Overall, we observe that the proposed approach is highly versatile and enables the approximate solution of intractable optimization instances under very general assumption that are out of reach for currently available gradient-based methodologies.

\subsection{Additional applications}
\par Let us observe that while Sections \ref{subsec: 2SP}--\ref{subsec: minimax} focus on cases where $F(\cdot,\xi)$ represents the value function of some optimization problem, the proposed algorithm is applicable in a plethora of other settings in which the presence of inexact oracles for the evaluation of the objective function of \eqref{eqn: main problem} remains crucial. Indeed, a natural example includes cases in which the evaluation of $F(\cdot,\xi)$ requires the utilization of some numerical simulation of a real-world process (e.g., involving the solution of discretized partial differential equations, among many other examples). In this case, the evaluation oracles remain inexact and problem \eqref{eqn: main problem} enables one to solve general stochastic parametric problems under the minimal assumption of Lipschitz continuity. Applications of this form appear in several real-world domains, and are typically classified as hyperparameter tuning problems (e.g., see \cite[Section 4.2]{Pougk_SISC} for an example problem in the context of hyperparameter tuning of algorithmic parameters). For simplicity of exposition, we refer the reader to \cite[Chapter 4]{powell2022reinforcement} for a detailed discussion on several application instances of this form.
\section{Conclusions} \label{sec: conclusions}
\par In this work, we derive a zeroth-order method suitable for the solution of general nonsmooth and nonconvex stochastic composite optimization problems in which the real-valued part of the objective is Lipschitz continuous while the extended-valued one is closed, proper, and convex. The algorithm is shown to converge, non-asymptotically, close to a stationary point under minimal assumptions, where near-stationarity is controlled using a novel optimality measure proposed herein (generalizing notions that are currently available in the literature). Importantly, the algorithm is able to operate under general stochastic oracles, providing inexact and biased evaluations of the stochastic objective function. 
\par In light of the generality of the proposed algorithm, we showcase its ability of handling (in a theoretically supported manner) large classes of two-stage stochastic programming as well as nonconvex-nonconcave stochastic minimax optimization problems, in regimes that are out-of-reach of alternative optimization methods that are currently available in the literature. Specifically, we demonstrate the versatility of the proposed methodology by juxtaposing the assumptions required to establish its non-asymptotic ergodic convergence in several challenging applications against the assumptions required by alternative state-of-the-art approaches appearing in the literature.

\vskip 6mm
\noindent{\bf Acknowledgments}

\noindent Dionysis Kalogerias was supported by the  National Natural Science Foundation  under Grant No. 2242215.

\bibliographystyle{unsrt}
\bibliography{references}
\end{document}